\def\imod#1{\allowbreak\mkern10mu({\operator@font mod}\,\,#1)}
\newtheorem{theorem}{Theorem}[section]
\newtheorem{lemma}{Lemma}[section]
\newtheorem{corollary}{Corollary}[section]
\newtheorem{fact}{Fact}[section]
\theoremstyle{definition}
\newtheorem{definition}{Definition}[section]
\newtheorem{remark}{Remark}[section]
\begin{document}

\begin{center}
\title{Unitary Cayley Graphs of Dedekind Domain Quotients}
\author{Colin Defant}
\address{University of Florida \\ 1400 Stadium Rd. \\ Gainesville, FL 32611 United States}
\email{cdefant@ufl.edu}
\end{center}
\vskip .2 in

\begin{abstract}
If $X$ is a commutative ring with unity, then the unitary Cayley graph of $X$, denoted $G_X$, is defined to be the graph whose vertex set is $X$ and whose edge set is $\{\{a,b\}\colon a-b\in X^\times\}$. When $R$ is a Dedekind domain and $I$ is an ideal of $R$ such that $R/I$ is finite and nontrivial, we refer to $G_{R/I}$ as a \emph{generalized totient graph}. We study generalized totient graphs as generalizations of the graphs $G_{\mathbb{Z}/(n)}$, which have appeared recently in the literature, sometimes under the name \emph{Euler totient Cayley graphs}. We begin by generalizing to Dedekind domains the arithmetic functions known as Schemmel totient functions, and we use one of these generalizations to provide a simple formula, for any positive integer $m$, for the number of cliques of order $m$ in a generalized totient graph. In particular, we prove that the number of cliques of order $m$ in $G_{\mathbb Z/(n)}$ is \[\prod_{k=1}^m\frac{S_{k-1}(n)}{k},\] where $S_r$ is the $r^{\text{th}}$ Schemmel totient function. 

We then proceed to determine many properties of generalized totient graphs such as their clique numbers, chromatic numbers, chromatic indices, clique domination numbers, and (in many, but not all cases) girths. We also determine the diameter of each component of a generalized totient graph. We correct one erroneous claim about the clique domination numbers of Euler totient Cayley graphs that has appeared in the literature and provide a counterexample to a second claim about the strong domination numbers of these graphs.   
\end{abstract}

\maketitle

\noindent 2010 {\it Mathematics Subject Classification}:  05C25; 05C75; 05C30.

\noindent \emph{Keywords: } Unitary Cayley graph; Dedekind domain; Schemmel totient function; clique; domination; graph parameter.
 
\section{Introduction}
\label{intro}
Throughout this paper, we will let $R$ be an arbitrary Dedekind domain. For nonzero ideals $I$ and $J$ of $R$, we will make repeated use of the fact that $\vert R/IJ\vert=\vert R/I\vert\cdot\vert R/J\vert$ regardless of whether or not $I$ and $J$ are relatively prime ideals. Furthermore, if $I$ factors into powers of prime ideals as $I=P_1^{\alpha_1}\cdots P_t^{\alpha_t}$, then $a+I\in (R/I)^\times$ if and only if $a\not\in P_i$ for each $i\in\{1,\ldots,t\}$. When $R/I$ is finite, we will refer to $\vert R/I\vert$ as the index of $I$ in $R$. If $I\neq (0)$ and $R/I$ is finite and nontrivial, then we will let $Q(I)$ denote the minimum of $\vert R/P\vert$ as $P$ ranges over all prime ideal divisors of $I$.

The well-known Euler totient function $\phi\colon\mathbb{N}\rightarrow\mathbb{N}$ maps a positive integer $n$ to the number of positive integers that are less than or equal to $n$ and relatively prime to $n$. In other words, $\phi(n)=\vert (\mathbb{Z}/(n))^\times\vert$. In 1869, V. Schemmel introduced a class of functions $S_r$, now known as Schemmel totient functions, which generalize the Euler totient function. For all positive integers $r$ and $n$, $S_r(n)$ counts the number of positive integers $k\leq n$ such that $\gcd(k+i,n)=1$ for all $i\in\{0,1,\ldots,r-1\}$. Thus, $S_1=\phi$. We will convene to let $S_0(n)=n$ for all $n\geq 1$. For each $r\geq 0$, $S_r$ is a multiplicative arithmetic function that satisfies 
\begin{equation} \label{Eq1} 
S_r(p^{\alpha})=\begin{cases} 0, & \mbox{if } p\leq r; \\ p^{\alpha-1}(p-r), & \mbox{if } p>r \end{cases} 
\end{equation} 
for all primes $p$ and positive integers $\alpha$ \cite{Schemmel69}. We may consider an extension of the Euler totient function to Dedekind domains, defining $\varphi(R/I)=\vert (R/I)^\times\vert$ whenever $I$ is an ideal of $R$ such that $R/I$ is finite (we will use the symbol $\phi$ to represent the traditional Euler totient function whose domain is $\mathbb Z^+$, and we will use $\varphi$ to represent this mapping from quotients of Dedekind domains to $\mathbb{Z}$). Thus, $\varphi(\mathbb{Z}/(n))=\phi(n)$ for all $n\in\mathbb{N}$. Later, we will define two classes of functions that will each serve to extend the Schemmel totient functions to Dedekind domains. 

Our primary goal is to study properties of certain unitary Cayley graphs. If $X$ is a commutative ring with unity, then the unitary Cayley graph of $X$, denoted $G_X$, is defined to be the graph whose vertex set is $X$ and whose edge set is $\{\{a,b\}\colon a-b\in X^\times\}$. The unitary Cayley graphs $G_{\mathbb{Z}/(n)}$ for $n\in\mathbb{Z}^+$ have been named ``Euler totient Cayley graphs" \cite{Madhavi10,Maheswari13a,Maheswari13b}. Several researchers have shown that the graph $G_{\mathbb{Z}/(n)}$ contains exactly $\displaystyle{\frac{1}{6}}n\phi(n)S_2(n)$ triangles \cite{Dejter95,Klotz07,Madhavi10}, and Manjuri and Maheswari have studied Euler totient Cayley graphs in the context of domination parameters \cite{Maheswari13a,Maheswari13b}.  Klotz and Sander have studied, among other properties, the diameters and eigenvalues of Euler totient Cayley graphs \cite{Klotz07}, and their paper gives a list of references to other results related to these graphs. 
\par 
We define a \emph{generalized totient graph} to be a unitary Cayley graph $G_{R/I}$, where $I$ is an ideal of the Dedekind domain $R$ and $R/I$ is finite and nontrivial. We seek to gain information about many of the properties of generalized totient graphs. In particular, we will use one of our two extensions of the Schemmel totient functions to give a formula, for each positive integer $m$, for the number of cliques of order $m$ in a given generalized totient graph. This formula, which apparently has not yet appeared anywhere in the literature even for Euler totient Cayley graphs, will allow us to determine the clique domination numbers of generalized totient graphs and correct an erroneous claim that Manjuri and Maheswari have made regarding this topic. We will build upon the work of Klotz and Sander, who have determined the diameters of Euler totient Cayley graphs. We end the paper with suggestions for further research and a counterexample to a claim that Manjuri and Maheswari have made regarding the strong domination numbers of Euler totient Cayley graphs.
\section{Extending the Schemmel Totient Functions}  
Our first extension of the Schemmel totient functions is inspired by our original definition of $S_r(n)$, for any given $r,n\in\mathbb{N}$, as the number of positive integers $k\leq n$ such that $\gcd(k+i,n)=1$ for all $i\in\{0,1,\ldots,r-1\}$. Implicit in the following definition is the fact that every Dedekind domain has a unity element, which we will denote $1$. Furthermore, a positive integer $k$, when used to denote an element of a Dedekind domain, will be understood to represent the sum $\underbrace{1+1+\cdots+1}_{k\  \text{times}}$.    
\begin{definition} \label{Def2.1} 
Let $I$ be an ideal of $R$ such that $R/I$ is finite. For any positive integer $r$, we define the set $L_r(R/I)$ by 
\[L_r(R/I)=\{a+I\in R/I\colon a+i+I\in (R/I)^\times\hspace{1mm}\forall\hspace{1mm} i\in\{0,1,\ldots,r-1\}\}.\] 
Furthermore, we define $\mathscr{S}_r(R/I)$ by $\mathscr{S}_r(R/I)=\vert L_r(R/I)\vert$.   
\end{definition}  
\begin{remark} \label{Rem2.1}  
Setting $R=\mathbb{Z}$ and $I=(n)$ in Definition \ref{Def2.1} yields $\mathscr{S}_r(\mathbb{Z}/(n))$ $=S_r(n)$. Observe that $\mathscr{S}_r(R/R)=1$. Also, note that $L_1(R/I)=(R/I)^\times$, so $\mathscr{S}_1(R/I)=\vert (R/I)^\times\vert=\varphi(R/I)$.   
\end{remark} 
The following two theorems show that the functions $\mathscr{S}_r$ can be evaluated using a formula similar to \eqref{Eq1}. 
\begin{theorem} \label{Thm2.1} 
Let $I$ and $J$ be relatively prime nonzero  ideals of $R$ such that $R/I$ and $R/J$ are finite. Then $\mathscr{S}_r(R/IJ)=\mathscr{S}_r(R/I)\mathscr{S}_r(R/J)$ for all positive integers $r$.  
\end{theorem}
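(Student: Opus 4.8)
The plan is to invoke the Chinese Remainder Theorem. Since $I$ and $J$ are relatively prime, the map $\psi\colon R/IJ\to(R/I)\times(R/J)$ defined by $\psi(a+IJ)=(a+I,a+J)$ is a well-defined ring isomorphism. I will use this isomorphism to set up a bijection between $L_r(R/IJ)$ and $L_r(R/I)\times L_r(R/J)$, which immediately yields the claimed product formula for the cardinalities.

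First I would record the standard facts: $\psi$ is a ring homomorphism because reduction modulo $IJ$ followed by each projection is, and $\psi$ is bijective precisely because $I+J=R$; finiteness of $R/IJ$ (which equals $\vert R/I\vert\cdot\vert R/J\vert$, as noted in the introduction) guarantees all the sets in sight are finite. Next, because $\psi$ is a ring isomorphism, an element $x+IJ$ lies in $U(R/IJ)$ if and only if $\psi(x+IJ)=(x+I,x+J)$ lies in $U((R/I)\times(R/J))$; and a pair lies in the unit group of a product ring if and only if each coordinate is a unit. Hence $x+IJ\in U(R/IJ)$ if and only if $x+I\in U(R/I)$ and $x+J\in U(R/J)$.

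The core step is then to translate the defining condition of $L_r$ through $\psi$. For a fixed $i\in\{0,1,\ldots,r-1\}$, since $\psi$ is additive and $\psi(1+IJ)=(1+I,1+J)$, we get $\psi(a+i+IJ)=(a+i+I,a+i+J)$, where on each side $i$ denotes the appropriate $i$-fold sum of the unity element. Combining this with the previous paragraph, $a+i+IJ\in U(R/IJ)$ if and only if $a+i+I\in U(R/I)$ and $a+i+J\in U(R/J)$. Quantifying over all $i\in\{0,1,\ldots,r-1\}$ shows that $a+IJ\in L_r(R/IJ)$ if and only if $a+I\in L_r(R/I)$ and $a+J\in L_r(R/J)$, i.e.\ if and only if $\psi(a+IJ)\in L_r(R/I)\times L_r(R/J)$. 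Thus $\psi$ restricts to a bijection $L_r(R/IJ)\to L_r(R/I)\times L_r(R/J)$, and taking cardinalities gives $\mathscr{S}_r(R/IJ)=\mathscr{S}_r(R/I)\,\mathscr{S}_r(R/J)$.

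There is essentially no serious obstacle here; the only points demanding a little care are checking that $\psi$ genuinely is an isomorphism when $I$ and $J$ are merely coprime rather than maximal, which is the usual CRT argument using $I+J=R$, and making sure the translates $a+i$ are carried correctly by $\psi$, which is immediate once one observes that $\psi$ is a ring map. The hypothesis that $I$ and $J$ are relatively prime enters only through this isomorphism and cannot be dropped, since in general $R/IJ$ need not decompose as a product of $R/I$ and $R/J$.
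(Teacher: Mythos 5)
Your argument is correct and follows essentially the same route as the paper: both use the Chinese Remainder Theorem isomorphism $R/IJ\cong R/I\oplus R/J$, observe that units correspond to pairs of units componentwise, translate the defining condition of $L_r$ coordinatewise, and conclude that the isomorphism restricts to a bijection $L_r(R/IJ)\to L_r(R/I)\times L_r(R/J)$. No gaps; the extra care you take with coprimality versus maximality and with the translates $a+i$ matches what the paper leaves implicit.
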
 
\begin{proof} 
Fix some positive integer $r$. Consider the natural ring homomorphisms $\psi_1\colon R/IJ\rightarrow R/I$ and
$\psi_2\colon R/IJ\rightarrow R/J$ defined by $\psi_1(a+IJ)=a+I$ and $\psi_2(a+IJ)=a+J$. Because $I$ and $J$ are relatively prime, we know that the function $f\colon R/IJ\rightarrow R/I\oplus R/J$ defined by $f\colon a+IJ\mapsto (a+I,a+J)=(\psi_1(a+IJ),\psi_2(a+IJ))$ is a ring isomorphism. Now, $a+IJ\in L_r(R/IJ)$ if and only if $a+i+IJ\in (R/IJ)^\times$ for all $i\in\{0,1,\ldots,r-1\}$. This occurs if and only if $\psi_1(a+IJ)\in L_r(R/I)$ and $\psi_2(a+IJ)\in L_r(R/J)$, which occurs if and only if $f(a+IJ)\in L_r(R/I)\times L_r(R/J)$. Thus, there is a bijection between $L_r(R/IJ)$ and $L_r(R/I)\times L_r(R/J)$, so $\vert L_r(R/IJ)\vert=\vert L_r(R/I)\vert\cdot \vert L_r(R/J)\vert$. 
 
\end{proof}  
\begin{theorem} \label{Thm2.2} 
Let $P$ be a prime ideal of $R$ such that $R/P$ is finite. Let $r$ and $\alpha$ be positive integers. Then 
\[\mathscr{S}_r(R/P^{\alpha})=
\begin{cases} \vert R/P\vert^{\alpha-1}(\vert R/P\vert-r), & \mbox{if } r\leq char(R/P); \\ \vert R/P\vert^{\alpha-1}(\vert R/P\vert-char(R/P)), & \mbox{if } r>char(R/P). \end{cases}\]
\end{theorem}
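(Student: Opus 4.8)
The plan is to reduce the evaluation of $\mathscr{S}_r(R/P^{\alpha})=\vert L_r(R/P^{\alpha})\vert$ to a counting problem inside the finite field $R/P$. If $P=(0)$, then $R$ itself is a finite integral domain, hence a finite field, and $R/P^{\alpha}=R$, so that degenerate case will be subsumed by what follows with $\alpha-1=0$; thus assume $P\neq(0)$. First I would note that $P^{\alpha}$ has the single prime ideal divisor $P$, so the description of units recalled in the introduction gives $a+P^{\alpha}\in U(R/P^{\alpha})$ if and only if $a\notin P$. Consequently, for $a\in R$ we have $a+P^{\alpha}\in L_r(R/P^{\alpha})$ if and only if $a+i\notin P$ for every $i\in\{0,1,\ldots,r-1\}$, and this condition depends only on the image of $a$ under the canonical surjection $\pi\colon R/P^{\alpha}\to R/P$.

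Next I would examine the fibers of $\pi$. Its kernel is $P/P^{\alpha}$, and since $\vert R/P^{\alpha}\vert=\vert R/P\vert^{\alpha}$ (applying the identity $\vert R/IJ\vert=\vert R/I\vert\cdot\vert R/J\vert$ from the introduction repeatedly to the factorization $P^{\alpha}=P\cdots P$), every fiber of $\pi$ has exactly $\vert R/P\vert^{\alpha-1}$ elements. Letting $M=\{a+P\in R/P\colon a+i\notin P\ \text{for all}\ i\in\{0,1,\ldots,r-1\}\}$, the previous paragraph shows that $L_r(R/P^{\alpha})=\pi^{-1}(M)$, and hence $\mathscr{S}_r(R/P^{\alpha})=\vert R/P\vert^{\alpha-1}\,\vert M\vert$.

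Finally I would compute $\vert M\vert=\vert R/P\vert-\vert\{-i\colon i=0,1,\ldots,r-1\}\vert$, where $i$ and $-i$ denote the corresponding elements of $R/P$ under the standing convention. In any ring, $i\cdot 1$ and $j\cdot 1$ coincide exactly when the characteristic divides $i-j$, so the set $\{i\colon i=0,1,\ldots,r-1\}$, and therefore its image under the additive involution $x\mapsto-x$, has precisely $\min(r,char(R/P))$ elements in $R/P$. Thus $\vert M\vert=\vert R/P\vert-\min(r,char(R/P))$, which, combined with the identity of the previous paragraph, yields exactly the two cases in the statement according to whether $r\leq char(R/P)$ or $r>char(R/P)$.

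The one step that warrants real attention is this last count: one must observe that the ``consecutive'' field elements $0,1,\ldots,r-1$ of $R/P$ repeat modulo the characteristic, so that once $r$ exceeds $char(R/P)$ no new forbidden residues appear; this wrap-around is precisely the source of the branching in the formula. The remaining ingredients — the local structure of $R/P^{\alpha}$ and the bookkeeping of fiber sizes — are routine.
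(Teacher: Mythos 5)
Your proof is correct and follows essentially the same route as the paper: reduce to $R/P$ via the canonical surjection $R/P^{\alpha}\to R/P$ with fibers of size $\vert R/P\vert^{\alpha-1}$, then count the complement of $L_r(R/P)$ as the cosets $-i+P$, $0\leq i\leq r-1$, of which there are $\min(r,char(R/P))$. The only cosmetic differences are that you merge the paper's two cases into the single expression $\min(r,char(R/P))$ and add a remark on the degenerate case $P=(0)$ (which the paper handles separately, after the theorem).
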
 
\begin{proof} 
If we define a ring homomorphism $\psi\colon R/P^{\alpha}\rightarrow R/P$ by $\psi(a+P^{\alpha})=a+P$, then we see that $a+P^{\alpha}\in (R/P^{\alpha})^\times$ if and only if $\psi(a+P^{\alpha})\in (R/P)^\times$. More generally, $a+i+P^{\alpha}\in (R/P^{\alpha})^\times$ for all $i\in\{0,1,\ldots,r-1\}$ if and only if $\psi(a+i+P^{\alpha})\in (R/P)^\times$ for all $i\in\{0,1,\ldots,r-1\}$. Therefore, $a+P^{\alpha}\in L_r(R/P^{\alpha})$ if and only if $\psi(a+P^{\alpha})\in L_r(R/P)$. As $\psi$ is clearly surjective and $\displaystyle{\frac{\vert R/P^{\alpha}\vert}{\vert R/P\vert}=\vert R/P\vert^{\alpha-1}}$, we know that $\psi$ is a $k$-to-$1$ mapping, where $k=\vert R/P\vert^{\alpha-1}$. This shows that $\vert L_r(R/P^{\alpha})\vert$ $=\vert R/P\vert^{\alpha-1}\vert L_r(R/P)\vert$. Since $P$ is a prime ideal of the Dedekind domain $R$, $P$ is maximal. Consequently,  $R/P$ is a field. It follows that $(R/P)\backslash (L_r(R/P))=\{P, -1+P,\ldots,-(r-1)+P\}$ if $r\leq char(R/P)$ and $(R/P)\backslash(L_r(R/P))=\{P,-1+P,\ldots,-(char(R/P)-1)+P\}$ if $r>char(R/P)$. Thus, \[\vert L_r(R/P)\vert=
\begin{cases} \vert R/P\vert-r, & \mbox{if } r\leq char(R/P); \\ \vert R/P\vert-char(R/P), & \mbox{if } r>char(R/P). \end{cases}\] 
\end{proof}  
Let $I=P_1^{\alpha_1}P_2^{\alpha_2}\cdots P_k^{\alpha_k}$, where $P_1,P_2,\ldots,P_k$ are distinct prime ideals of $R$ and $\alpha_1,\alpha_2,\ldots,$ $\alpha_k$ are positive integers. Then we may use Theorem \ref{Thm2.1} repeatedly to write
$\mathscr{S}_r(R/I)=$ $\displaystyle{\prod_{i=1}^k\mathscr{S}_r(R/P_i^{\alpha_i})}$ 
for any positive integer $r$. Theorem \ref{Thm2.2} then allows us to evaluate $\mathscr{S}_r(R/P_i^{\alpha_i})$ for each positive integer $i\leq k$. Note that $\mathscr{S}_r(R/(0))$ is defined if $R$ is finite. In this case, $R$ must be a field (any finite integral domain is a field), so an argument similar to that used in the proof of Theorem \ref{Thm2.2} shows that 
\[\mathscr{S}_r(R/(0))=
\vert R\vert-\min(r,char(R)).\] 

We have established one extension of the Schemmel totient functions that is interesting in its own right, but we will see that the following slightly different extension will prove itself much more useful for our purposes later. 
\begin{definition} \label{Def2.2} 
Let $r$ be a nonnegative integer. For each nonzero ideal $I$ of finite index in $R$, we may define the nonnegative integer $\mathcal{S}_r(R/I)$ by the following rules: 
\begin{enumerate}[(a)]
\item $\mathcal{S}_r(R/R)=1$. 
\item If $P$ is a prime ideal of finite index in $R$ and $\alpha$ is a positive integer, then \[\mathcal{S}_r(R/P^{\alpha})=
\begin{cases} \vert R/P\vert^{\alpha-1}(\vert R/P\vert-r), & \mbox{if } r\leq\vert R/P\vert; \\ 0, & \mbox{if } r>\vert R/P\vert. \end{cases}\]
\item If $A$ and $B$ are relatively prime nonzero ideals of finite index in $R$, then $\mathcal{S}_r(R/AB)=\mathcal{S}_r(R/A)\mathcal{S}_r(R/B)$. 
\end{enumerate} 
\end{definition} 
\begin{remark} \label{Rem2.2} 
First, note that we can evaluate $\mathcal{S}_r(R/I)$ for any nonzero ideal $I$ of finite index in $R$ by simply decomposing $I$ into a product of powers of prime ideals and combining the given rules. Then $\mathcal{S}_r(R/I)=0$ if and only if $Q(I)\leq r$. It is easy to see that $\mathcal{S}_0(R/I)=\vert R/I\vert$ and $\mathcal{S}_1(R/I)=\mathscr{S}_1(R/I)=\varphi(R/I)$ for any nonzero ideal $I$ of finite index in $R$. Finally, note that if we set $R=\mathbb{Z}$, then $\mathcal{S}_r(\mathbb{Z}/(n))=\mathscr{S}_r(\mathbb{Z}/(n))=S_r(n)$ for any positive integers $r$ and $n$.  
\end{remark} 
\section{Enumerating Cliques in Generalized Totient Graphs} 
In this section, we will prove our central result, which provides a formula for the number of cliques of order $m$ in any generalized totient graph. From now on, we will always let $I$ denote a nonzero ideal of finite index in the Dedekind domain $R$.

We will need the following lemma. 
 
\begin{lemma} \label{Lem2.3} 
Let $I$ be a nonzero ideal of $R$ such that $R/I$ is finite and nontrivial, and let $r$ be a positive integer. Then $Q(I)\geq r$ if and only if there exist elements $x_1+I,x_2+I,\ldots,x_r+I$ of $R/I$ such that $x_i-x_j+I\in (R/I)^\times$ for all distinct $i,j\in\{1,2,\ldots,r\}$. Furthermore, if $Q(I)\geq r$, then, for any such choice of $x_1+I,x_2+I,\ldots,x_r+I$, there are exactly $\mathcal{S}_r(R/I)$ elements $w+I$ of $R/I$ that satisfy $w-x_i+I\in (R/I)^\times$ for all $i\in\{1,2,\ldots,r\}$. 
\end{lemma}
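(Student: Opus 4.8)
The plan is to reduce to the case in which $I$ is a power of a single prime ideal by means of the Chinese Remainder Theorem, and then to observe that $G_{R/P^{\alpha}}$ is complete multipartite (its parts being the fibers of the reduction map $R/P^{\alpha}\to R/P$), at which point both assertions become elementary counts.

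First I would factor $I=P_1^{\alpha_1}\cdots P_k^{\alpha_k}$ into powers of distinct (hence maximal) prime ideals, with $k\geq 1$ since $R/I$ is nontrivial. Exactly as in the proof of Theorem \ref{Thm2.1}, the natural map $f\colon R/I\to\bigoplus_{\ell=1}^{k}R/P_\ell^{\alpha_\ell}$ is a ring isomorphism under which an element is a unit if and only if each of its components is a unit. Hence there exist $x_1+I,\ldots,x_r+I$ with $x_i-x_j+I\in U(R/I)$ for all distinct $i,j$ if and only if, for every $\ell$, there exist elements of $R/P_\ell^{\alpha_\ell}$ with pairwise unit differences; and if such a configuration $x_1+I,\ldots,x_r+I$ is fixed, then its $\ell$-th components $x_1^{(\ell)},\ldots,x_r^{(\ell)}$ also have pairwise unit differences (and are in particular pairwise distinct), while an element $w+I$ satisfies $w-x_i+I\in U(R/I)$ for all $i$ if and only if, for each $\ell$, the component $w^{(\ell)}$ satisfies $w^{(\ell)}-x_i^{(\ell)}\in U(R/P_\ell^{\alpha_\ell})$ for all $i$. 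Thus the number of admissible $w+I$ is the product over $\ell$ of the number of admissible $w^{(\ell)}$. Since $Q(I)=\min_\ell\vert R/P_\ell\vert$ and $\mathcal{S}_r(R/I)=\prod_{\ell}\mathcal{S}_r(R/P_\ell^{\alpha_\ell})$, it suffices to prove the lemma when $I=P^{\alpha}$.

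So assume $I=P^{\alpha}$ and set $q=\vert R/P\vert\geq 2$. Then $\vert R/P^{\alpha}\vert=q^{\alpha}$, and the natural surjection $\pi\colon R/P^{\alpha}\to R/P$ is an additive group homomorphism whose kernel $P/P^{\alpha}$ has order $\vert R/P^{\alpha}\vert/\vert R/P\vert=q^{\alpha-1}$, so every fiber of $\pi$ has exactly $q^{\alpha-1}$ elements. By the unit criterion recalled in the introduction, $a-b+P^{\alpha}\in U(R/P^{\alpha})$ if and only if $a-b\notin P$, i.e. if and only if $\pi(a+P^{\alpha})\neq\pi(b+P^{\alpha})$; thus two elements of $R/P^{\alpha}$ have a unit difference precisely when they lie in different fibers of $\pi$. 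It follows immediately that elements $x_1+P^{\alpha},\ldots,x_r+P^{\alpha}$ with pairwise unit differences exist if and only if one can pick $r$ elements lying in $r$ distinct fibers, which is possible if and only if $q\geq r$; since $Q(P^{\alpha})=q$, this is the first assertion. Now suppose $q\geq r$ and fix such $x_1+P^{\alpha},\ldots,x_r+P^{\alpha}$; they lie in $r$ distinct fibers, say $\pi^{-1}(c_1),\ldots,\pi^{-1}(c_r)$. An element $w+P^{\alpha}$ has a unit difference with every $x_i+P^{\alpha}$ if and only if $\pi(w+P^{\alpha})\notin\{c_1,\ldots,c_r\}$, that is, if and only if $w+P^{\alpha}$ lies in one of the remaining $q-r$ fibers; as each fiber has $q^{\alpha-1}$ elements, the number of such $w+P^{\alpha}$ is $(q-r)q^{\alpha-1}=\vert R/P\vert^{\alpha-1}(\vert R/P\vert-r)$, which, because $r\leq q$, equals $\mathcal{S}_r(R/P^{\alpha})$ by Definition \ref{Def2.2}(b). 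In particular this count is independent of the chosen $x_i+P^{\alpha}$, and the lemma follows.

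Most of this is bookkeeping; the one step needing a small idea is the ``if'' direction of the first assertion. For a general Dedekind domain one cannot simply take $x_i$ to be the integer $i-1$, because a residue field $R/P_\ell$ may have cardinality $\vert R/P_\ell\vert\geq r$ while its characteristic is smaller than $r$, so the integers $0,1,\ldots,r-1$ need not be pairwise distinct modulo $P_\ell$. This is precisely why one invokes the Chinese Remainder Theorem to build the $x_i$ rather than writing down explicit representatives. Once the problem is localized at a single prime power and $G_{R/P^{\alpha}}$ is identified as complete multipartite with parts the fibers of $\pi$, both claims are straightforward.
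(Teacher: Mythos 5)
Your proof is correct and follows essentially the same route as the paper's: both rest on the CRT decomposition of $R/I$ into its prime-power factors and on counting the fibers of the reduction $R/P^{\alpha}\to R/P$, each of size $\vert R/P\vert^{\alpha-1}$. The only difference is organizational — you reduce the entire lemma to the prime-power case first (where $G_{R/P^{\alpha}}$ is complete multipartite), while the paper argues necessity by pigeonhole and constructs the $x_i$ via CRT directly in the general setting — but the underlying counts are identical.
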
 
\begin{proof} 
First, by way of contradiction, suppose that $Q(I)<r$ and that $x_1+I,x_2+I,\ldots,x_r+I$ are elements of $R/I$ such that $x_i-x_j+I\in (R/I)^\times$ for all distinct $i,j\in\{1,2,\ldots,r\}$. As $Q(I)<r$, there must be some prime ideal divisor $P$ of $I$ such that $\vert R/P\vert<r$. By the pigeonhole principle, we must have $x_i+P=x_j+P$ for some distinct $i,j\in\{1,2,\ldots,r\}$. However, this is a contradiction because it implies that $x_i-x_j\in P$, which then implies that $x_i-x_j+I\not\in (R/I)^\times$.  

Now, suppose $Q(I)\geq r$. Let us write $I=P_1^{\alpha_1}P_2^{\alpha_2}\cdots P_s^{\alpha_s}$, where $P_1,P_2,$ $\ldots,P_s$ are distinct prime ideals of $R$ and $\alpha_1,\alpha_2,\ldots,\alpha_s$ are positive integers. For each positive integer $v\leq s$, we may write $\vert R/P_v\vert=m_v\geq r$ and $R/P_v=\{t_{v,1}+P_v,t_{v,2}+P_v,\ldots,t_{v,m_v}+P_v\}$. For each positive integer $i\leq r$, the Chinese remainder theorem guarantees that it is possible to find some $x_i\in R$ such that $x_i+P_v=t_{v,i}+P_v$ for all positive integers $v\leq s$. Then, for all distinct $i,j\in\{1,2,\ldots,r\}$ and all $v\in\{1,2,\ldots,s\}$, we have $x_i-x_j+P_v=t_{v,i}-t_{v,j}+P_v\neq P_v$, so $x_i-x_j\not\in P_v$. This means that $x_i-x_j+I\in (R/I)^\times$ for all distinct $i,j\in\{1,2,\ldots,r\}$. 

Finally, suppose that $Q(I)\geq r$ and that $x_1+I,x_2+I,\ldots,x_r+I$ satisfy $x_i-x_j+I\in (R/I)^\times$ for all distinct $i,j\in\{1,2,\ldots,r\}$. For each $j\in\{1,2,\ldots,s\}$, let $N_j$ be the set of elements $a+P_j^{\alpha_j}$ of $R/P_j^{\alpha_j}$ such that $a-x_i\not\in P_j$ for all $i\in\{1,2,\ldots,r\}$ (observe that the choices of $a$ and $x_i$ as representatives of their cosets do not affect whether or not $a-x_i\in P_j$ because $P_j\supseteq P_j^{\alpha_j}\supseteq I$). By the Chinese remainder theorem, any element $w+I$ of $R/I$ is uniquely determined by the cosets of the ideals $P_1^{\alpha_1},P_2^{\alpha_2},\ldots,P_s^{\alpha_s}$ that contain the representative $w$. Therefore, the number of ways to choose an element $w+I$ of $R/I$ that satisfies $w-x_i+I\in (R/I)^\times$ for all $i\in\{1,2,\ldots,r\}$ is equal to $\displaystyle{\prod_{j=1}^s\vert N_j\vert}$. 
Fix $j\in\{1,2,\ldots,s\}$, and consider the natural homomorphism $\psi\colon R/P_j^{\alpha_j}\rightarrow R/P_j$ given by $a+P_j^{\alpha_j}\mapsto a+P_j$. We know that $\psi$ is a $k$-to-$1$ mapping, where $k=\vert R/P_j\vert^{\alpha-1}$. An element $A$ of $R/P_j^{\alpha_j}$ is in $N_j$ if and only if $\psi(A)\neq x_i+P_j$ for all $i\in\{1,2,\ldots,r\}$. In other words, $N_j$ is the preimage of the set $B=(R/P_j)\backslash \{x_1+P_j,x_2+P_j,\ldots,x_r+P_j\}$ under $\psi$. Hence, \[\vert N_j\vert=\vert R/P_j\vert^{\alpha-1}\vert B\vert=\vert R/P_j\vert^{\alpha-1}(\vert R/P_j\vert-r)=\mathcal{S}_r(R/P_j^{\alpha_j}).\] 
Using Definition \ref{Def2.2}, we see that the number of elements $w+I$ of $R/I$ that satisfy $w-x_i+I\in (R/I)^\times$ for all $i\in\{1,2,\ldots,r\}$ is \[\prod_{j=1}^s\vert N_j\vert=\prod_{j=1}^s\mathcal{S}_r(R/P_j^{\alpha_j})=\mathcal{S}_r(R/I).\]
 
\end{proof}  

\begin{theorem} \label{Thm3.1} 
For any positive integer $m$, the number of cliques of order $m$ in the graph $G_{R/I}$ is given by the expression 
\[\prod_{k=1}^m\frac{\mathcal{S}_{k-1}(R/I)}{k}.\]
\end{theorem}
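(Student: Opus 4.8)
The plan is to count \emph{ordered} $m$-cliques of $G_{R/I}$ — that is, sequences $(x_1+I,x_2+I,\ldots,x_m+I)$ of elements of $R/I$ with $x_i-x_j+I\in U(R/I)$ for all distinct $i,j\in\{1,\ldots,m\}$ — and then divide by $m!$. This is legitimate because every clique of order $m$ in $G_{R/I}$ consists of $m$ pairwise distinct vertices (if $x_i-x_j+I\in U(R/I)$ for $i\neq j$, then $x_i-x_j+I\neq I$ since $R/I$ is nontrivial, so $x_i+I\neq x_j+I$), and hence gives rise to exactly $m!$ ordered sequences; conversely every such ordered sequence, upon forgetting the order, is a clique of order $m$.

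To count the ordered $m$-cliques I would build the sequence one term at a time. There are $|R/I|=\mathcal{S}_0(R/I)$ choices for $x_1+I$. Suppose now that $x_1+I,\ldots,x_{k-1}+I$ have been chosen so that all of their pairwise differences lie in $U(R/I)$; by the first assertion of Lemma \ref{Lem2.3} this is possible precisely when $Q(I)\geq k-1$, and in that case the second assertion of Lemma \ref{Lem2.3}, applied with $r=k-1$ and the $r$-tuple $x_1+I,\ldots,x_{k-1}+I$, shows that there are exactly $\mathcal{S}_{k-1}(R/I)$ choices of $x_k+I$ with $x_k-x_i+I\in U(R/I)$ for all $i\in\{1,\ldots,k-1\}$, each extending the sequence to a valid ordered $k$-clique. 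Multiplying these counts over $k=1,\ldots,m$ gives that the number of ordered $m$-cliques equals $\prod_{k=1}^m\mathcal{S}_{k-1}(R/I)$, so the number of (unordered) cliques of order $m$ is $\frac{1}{m!}\prod_{k=1}^m\mathcal{S}_{k-1}(R/I)=\prod_{k=1}^m\frac{\mathcal{S}_{k-1}(R/I)}{k}$, as claimed.

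The only point demanding care is the degenerate range $Q(I)<m$, where $G_{R/I}$ has no clique of order $m$ (first part of Lemma \ref{Lem2.3}) and the stated formula should therefore equal $0$. This is automatic from Remark \ref{Rem2.2}: since $\mathcal{S}_r(R/I)=0$ exactly when $Q(I)\leq r$, the hypothesis $Q(I)<m$ forces $\mathcal{S}_{m-1}(R/I)=0$ and the product vanishes. When instead $Q(I)\geq m$, we have $Q(I)>k-1$ for every $k\leq m$, so at each stage the hypothesis $Q(I)\geq k-1$ of Lemma \ref{Lem2.3} is satisfied and each factor $\mathcal{S}_{k-1}(R/I)$ is nonzero, and the counting argument of the previous paragraph goes through without caveat. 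I do not expect a substantive obstacle: Lemma \ref{Lem2.3} already packages the difficult step-by-step enumeration, and what remains is the bookkeeping above together with the overcounting factor $m!$ relating ordered and unordered cliques.
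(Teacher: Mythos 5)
Your proposal is correct and is essentially the paper's argument: both rest entirely on Lemma \ref{Lem2.3}, which says that any pairwise-adjacent $(k-1)$-tuple extends in exactly $\mathcal{S}_{k-1}(R/I)$ ways, and both handle the degenerate case $Q(I)<m$ via the vanishing of $\mathcal{S}_{m-1}(R/I)$. The only difference is bookkeeping: the paper runs an induction on $m$, multiplying the count of $(m-1)$-cliques by $\mathcal{S}_{m-1}(R/I)$ and dividing by $m$, whereas you count ordered $m$-cliques in one product and divide by $m!$ at the end --- the same computation arranged differently.
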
 
\begin{proof} 
Let $C_m$ be the number of cliques of $G_{R/I}$ of order $m$. The result is trivial if $m=1$ because $\mathcal{S}_0(R/I)=\vert R/I\vert$, which is the number of vertices of $G_{R/I}$. Now, suppose that $m>1$. We will show that $C_m=\dfrac{\mathcal S_{m-1}(R/I)}{m}C_{m-1}$; the theorem will then follow by induction on $m$. If $C_{m-1}=0$, then of course $C_m=\dfrac{\mathcal S_{m-1}(R/I)}{m}C_{m-1}$ because there are no cliques of order $m$. Therefore, let us assume that there is at least one clique of order $m-1$ in $G_{R/I}$, say $D$. Let the vertices in $D$ be $x_1+I,x_2+I,\ldots,x_{m-1}+I$. Then $x_i-x_j+I\in (R/I)^\times$ for all distinct $i,j\in\{1,2,\ldots,m-1\}$ because $D$ is a clique. By Lemma \ref{Lem2.3}, there are exactly $\mathcal{S}_{m-1}(R/I)$ elements $w+I$ of $R/I$ that satisfy $w-x_i+I\in (R/I)^\times$ for all $i\in\{1,2,\ldots,m-1\}$. In other words, there are 
exactly $\mathcal{S}_{m-1}(R/I)$ vertices of $G_{R/I}$ that we may annex to $D$ to make a clique of order $m$. This counts each clique of order $m$ a total of $m$ times because there are $m$ subcliques of order $m-1$ in every clique of order $m$. Hence, $C_m=\dfrac{\mathcal S_{m-1}(R/I)}{m}C_{m-1}$ as desired. 
\end{proof} 
\begin{corollary} \label{Cor3.2} 
Let $m$ and $n$ be positive integers with $n>1$. The number of cliques of order $m$ in the Euler totient Cayley graph $G_{\mathbb{Z}/(n)}$ is \[\prod_{k=1}^m\frac{S_{k-1}(n)}{k}.\] 
\end{corollary}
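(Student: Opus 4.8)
The plan is to deduce this immediately from Theorem \ref{Thm3.1} by specializing the ambient Dedekind domain and ideal. First I would verify that $\mathbb{Z}$ is a Dedekind domain and that the ideal $I=(n)$ satisfies the standing hypotheses: since $n>1$, the quotient ring $\mathbb{Z}/(n)$ is finite (it has exactly $n$ elements) and nontrivial, so $G_{\mathbb{Z}/(n)}$ is a generalized totient graph in the sense defined before Theorem \ref{Thm3.1}. Thus Theorem \ref{Thm3.1} applies with $R=\mathbb{Z}$ and $I=(n)$, and it tells us that for any positive integer $m$, the number of cliques of order $m$ in $G_{\mathbb{Z}/(n)}$ equals $\displaystyle\prod_{k=1}^m\frac{\mathcal{S}_{k-1}(\mathbb{Z}/(n))}{k}$.

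The only remaining step is to identify the factors $\mathcal{S}_{k-1}(\mathbb{Z}/(n))$ appearing in this product with the classical Schemmel totient values $S_{k-1}(n)$. This is exactly the content of the final sentence of Remark \ref{Rem2.2}, which records that $\mathcal{S}_r(\mathbb{Z}/(n))=S_r(n)$ for all positive integers $r$ and $n$; and for the $k=1$ term, $\mathcal{S}_0(\mathbb{Z}/(n))=\vert\mathbb{Z}/(n)\vert=n=S_0(n)$ by the convention $S_0(n)=n$ established in the introduction together with Remark \ref{Rem2.2}. Substituting these equalities termwise into the product from Theorem \ref{Thm3.1} yields $\displaystyle\prod_{k=1}^m\frac{S_{k-1}(n)}{k}$, which is the claimed count.

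There is no real obstacle here: the corollary is a direct translation of Theorem \ref{Thm3.1} into the classical setting, and all the needed dictionary entries ($\mathbb{Z}$ Dedekind, $\mathbb{Z}/(n)$ finite and nontrivial for $n>1$, and $\mathcal{S}_r(\mathbb{Z}/(n))=S_r(n)$) have already been established earlier in the paper. The proof can therefore be stated in a single short paragraph.
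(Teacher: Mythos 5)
Your proposal is correct and matches the paper's (implicit) argument exactly: the corollary is obtained by specializing Theorem \ref{Thm3.1} to $R=\mathbb{Z}$, $I=(n)$, and invoking the identification $\mathcal{S}_r(\mathbb{Z}/(n))=S_r(n)$ (together with $\mathcal{S}_0(\mathbb{Z}/(n))=n=S_0(n)$) recorded in Remark \ref{Rem2.2}. Nothing is missing.
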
 
Observe that if we set $m=3$ in Corollary \ref{Cor3.2}, we recover the previously--discovered formula $\frac{1}{6}n\phi(n)S_2(n)$ for the number of triangles in $G_{\mathbb{Z}/(n)}$.  
Our proof of this formula seems much more natural and illuminating than those already in existence \cite{Dejter95,Klotz07,Madhavi10}. Before proceeding to uncover some additional properties of generalized totient graphs, we pause to note an interesting divisibility relationship that arises as a corollary of Theorem \ref{Thm3.1}. 
\begin{corollary} \label{Cor3.1} 
For any positive integer $m$, we have 
\[m!\Bigl\lvert\prod_{k=1}^{m}\mathcal{S}_{k-1}(R/I).\]
\end{corollary}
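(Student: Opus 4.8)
The plan is to deduce this entirely formally from Theorem~\ref{Thm3.1}, with no further combinatorics needed. By that theorem, for any positive integer $m$ the quantity
\[
N_m=\prod_{k=1}^m\frac{\mathcal{S}_{k-1}(R/I)}{k}
\]
is the number of cliques of order $m$ in $G_{R/I}$, and in particular $N_m$ is a nonnegative integer. So the real content to verify is that $N_m$, after clearing denominators, is exactly $\frac{1}{m!}\prod_{k=1}^m\mathcal{S}_{k-1}(R/I)$, and that this expression's numerator is an integer; the divisibility then drops out immediately.

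The key steps, in order, are as follows. First I would note that each factor $\mathcal{S}_{k-1}(R/I)$ is an integer: decomposing $I$ into a product of prime powers and applying Definition~\ref{Def2.2} writes $\mathcal{S}_{k-1}(R/I)$ as a product of the integers $\mathcal{S}_{k-1}(R/P^{\alpha})=\vert R/P\vert^{\alpha-1}(\vert R/P\vert-(k-1))$ (or of $0$), so the whole product $\prod_{k=1}^m\mathcal{S}_{k-1}(R/I)$ is an integer. Second, since $\prod_{k=1}^m k=m!$, we have $N_m=\frac{1}{m!}\prod_{k=1}^m\mathcal{S}_{k-1}(R/I)$. Third, because $N_m$ is an integer, it follows directly that $m!$ divides $\prod_{k=1}^m\mathcal{S}_{k-1}(R/I)$, which is the claim. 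The case in which some $\mathcal{S}_{k-1}(R/I)=0$ needs no separate treatment, since then the product is $0$ and $m!\mid 0$ trivially.

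There is essentially no obstacle beyond invoking Theorem~\ref{Thm3.1}; the one point I would flag explicitly is that the argument genuinely relies on the standing hypothesis that $R/I$ is nontrivial. Without it, the ``empty'' quotient $R/R$ has $\mathcal{S}_{k-1}(R/R)=1$ for every $k$, so the product equals $1$ and the asserted divisibility fails for all $m\ge 2$; the nontriviality assumption is exactly what guarantees that $N_m$ is a bona fide clique count. As a closing remark, specializing to $R=\mathbb{Z}$ and $I=(n)$ (via Remark~\ref{Rem2.2}) yields the statement that $m!$ divides $\prod_{k=1}^m S_{k-1}(n)$ for every integer $n>1$.
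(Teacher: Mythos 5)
Your proof is correct and is exactly the argument the paper intends: the corollary is stated as an immediate consequence of Theorem \ref{Thm3.1}, since the clique count $\prod_{k=1}^m\mathcal{S}_{k-1}(R/I)/k=\frac{1}{m!}\prod_{k=1}^m\mathcal{S}_{k-1}(R/I)$ is a nonnegative integer. Your remark about the standing nontriviality hypothesis is a fair (and correct) observation, but the approach is the same as the paper's.
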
 
\section{Other Properties of Generalized Totient Graphs} 
For any graph $G$, let $\chi(G)$ and $\chi'(G)$ denote the chromatic number and the chromatic index of $G$, respectively. Let $g(G)$, $d(G)$, $\Delta(G)$, and $\omega(G)$ denote, respectively, the girth, diameter, maximum degree, and clique number of $G$. A set $D$ of vertices of $G$ is said to dominate $G$ if every vertex in $G$ is either in $D$ or is adjacent to at least one element of $D$. The clique domination number $\gamma_{cl}(G)$ is the smallest positive integer such that there exists a clique of $G$ of order $\gamma_{cl}(G)$ that dominates $G$ (provided some dominating clique exists).

We will continue to let $I$ denote a nonzero ideal of the Dedekind domain $R$ such that $R/I$ is both finite and nontrivial. We begin with a fairly basic lemma concerning unitary Cayley graphs of commutative rings with unity. A symmetric graph is a graph $G$ such that if $A,B,C,D$ are vertices of $G$ with $A$ adjacent to $B$ and $C$ adjacent to $D$, then there exists an automorphism of $G$ that maps $A$ to $C$ and maps $B$ to $D$. 
\begin{lemma} \label{Thm4.1} 
Let $X$ be a commutative ring with unity. The unitary Cayley graph $G_X$ is symmetric and $\varphi(X)$-regular, where $\varphi(X)=\vert X^\times\vert$. 
\end{lemma} 
\begin{proof} 
Choose some $A,B,C,D\in X$ such that $A$ is adjacent to $B$ and $C$ is adjacent to $D$. Define a function $F\colon X\rightarrow X$ by \[F(Z)=C-(A-Z)(C-D)(A-B)^{-1}.\] Observe that $F(A)=C$ and $F(B)=D$. It is straightforward to see that $F$ is a bijection because $(A-B)$ and $(C-D)$ are units in $X$. Now, let $Y$ and $Z$ be any adjacent vertices in $G_X$. It follows from the fact that $Y-Z$, $A-B$, and $C-D$ are all units in $X$ that $F(Y)-F(Z)=(Y-Z)(C-D)(A-B)^{-1}\in X^\times$. Hence, $F(Y)$ and $F(Z)$ are adjacent. Similarly, $F$ maps nonadjacent vertices to nonadjacent vertices. This shows that $F$ is an automorphism, so $G_X$ is symmetric. 
As any symmetric graph is regular and the degree of the vertex $0$ is $\varphi(X)$, we see that $G_X$ is $\varphi(X)$-regular.   
\end{proof} 
One of the first results proved about unitary Cayley graphs states that if $n\in\mathbb Z^+$, then $G_{\mathbb Z/(n)}$ is bipartite if and only if $n$ is even \cite{Dejter95}. The proof is fairly straightforward, and it generalizes immediately to generalized totient graphs. For this reason, we record the following fact and omit the proof. 
\begin{fact} \label{Thm4.2} 
The generalized totient graph $G_{R/I}$ is bipartite if and only if $Q(I)=2$. 
\end{fact} 
Another standard result concerning unitary Cayley graphs states that the clique number and the chromatic number of $G_{\mathbb Z/(n)}$ are both equal to the smallest prime factor of $n$ \cite{Klotz07}. Again, the proof generalized in a straightforward manner, so we omit the proof of the next fact. We will remark, however, that the fact that $\omega(G_{R/I})=Q(I)$ follows immediately from Theorem \ref{Thm3.1} and the fact that $\mathcal{S}_r(R/I)=0$ if and only if $Q(I)\leq r$.
\begin{fact} \label{Thm4.3} 
We have $\omega(G_{R/I})=\chi(G_{R/I})=Q(I)$.   
\end{fact} 
Before attempting to determine the chromatic indices of generalized totient graphs, we need two lemmas that should make the proof of the following theorem relatively painless. 
\begin{lemma} \label{Lem4.1}
Let $m$ be a positive even integer. Let $G$ be a simple $m$-partite graph with partite sets $A_1,A_2,\ldots,A_m$, all of the same cardinality, such that for all distinct $i,j,k\in\{1,2,\ldots,m\}$ and any $v\in A_i$, $v$ is adjacent to exactly as many vertices in $A_j$ as it is to vertices in $A_k$.  
Then $\chi'(G)=\Delta(G)$. 
\end{lemma} 
\begin{proof} 
Because of the trivial inequality $\chi'(G)\geq\Delta(G)$, it suffices to exhibit a proper edge-coloring of $G$ with $\Delta(G)$ colors. Let $V$ be a vertex of $G$ of degree $\Delta(G)$. Without loss of generality, we may assume that $V\in A_m$. Suppose $V$ is adjacent to exactly $t$ vertices in $A_1$. Then, by property $(b)$, $V$ is adjacent to exactly $t$ vertices in $A_i$ for each $i\in\{1,2,\ldots,m-1\}$. As $V$ is not adjacent to any vertices in $A_m$, we see that $\Delta(G)=(m-1)t$. Hence, we may label our $\Delta(G)$ colors $C_{\mu,\lambda}$, where $\mu$ ranges over the set $\{1,2,\ldots,m-1\}$ and $\lambda$ ranges over the set $\{1,2,\ldots,t\}$. Now, let $b_1,b_2,\ldots,b_m$ be the vertices of the complete graph $K_m$. It is well-known that, because $m$ is even, it is possible to properly color the edges of $K_m$ with $m-1$ colors, say $c_1,c_2,\ldots,c_{m-1}$. For any distinct $i,j\in\{1,2,\ldots,m\}$, let $f(i,j)$ be the integer such that $c_{f(i,j)}$ is the color used to color the edge connecting $b_i$ and $b_j$. 

We now describe how to color the edges of $G$. For any distinct $i,j\in\{1,2,\ldots,m\}$, let $H_{i,j}$ be the subgraph of $G$ induced by the vertices in $A_i\cup A_j$. Every such graph $H_{i,j}$ is clearly a bipartite graph with maximum degree at most $t$. Hence, K\"{o}nig's line coloring theorem implies that, for any distinct $i,j\in\{1,2,\ldots,m\}$, it is possible to properly color the edges of $H_{i,j}$ with only the colors in the set $\{C_{f(i,j),\lambda}\colon\lambda\in\{1,2,\ldots,t\}\}$. Doing so for all subgraphs $H_{i,j}$ yields a proper coloring of $G$ with $\Delta(G)$ colors. 
\end{proof} 
\begin{lemma} \label{Lem4.2} 
Let $\delta$ be a positive integer. Any $\delta$-regular simple graph $G$ with an odd number of vertices has chromatic index $\chi'(G)=\delta+1$.  
\end{lemma}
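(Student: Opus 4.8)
The plan is to combine Vizing's Theorem with a parity argument. Vizing's Theorem tells us that any simple graph $G$ satisfies $\Delta(G)\le\chi'(G)\le\Delta(G)+1$; since $G$ is $\delta$-regular we have $\Delta(G)=\delta$, so $\chi'(G)\in\{\delta,\delta+1\}$. It therefore suffices to rule out the possibility $\chi'(G)=\delta$.

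To do this, I would suppose for contradiction that there is a proper edge-coloring of $G$ using exactly $\delta$ colors $c_1,\ldots,c_\delta$, and for each $j\in\{1,\ldots,\delta\}$ let $M_j$ denote the set of edges assigned the color $c_j$. Fix a vertex $v$ of $G$. Because $G$ is $\delta$-regular, $v$ lies on exactly $\delta$ edges, and since the coloring is proper these $\delta$ edges carry $\delta$ distinct colors; hence every color $c_j$ appears on precisely one edge incident to $v$. As $v$ was arbitrary, each $M_j$ is a perfect matching of $G$. But a perfect matching partitions $V(G)$ into two-element sets, which forces $|V(G)|$ to be even, contradicting the hypothesis that $G$ has an odd number of vertices. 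Hence $\chi'(G)\neq\delta$, and combining this with the bound from Vizing's Theorem yields $\chi'(G)=\delta+1$.

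There is essentially no obstacle here once Vizing's Theorem is granted: the remaining content is the one-line observation that in a proper $\delta$-edge-coloring of a $\delta$-regular graph every color class must be a perfect matching. (If one wished to avoid citing Vizing, the alternative would be to construct an explicit proper edge-coloring of $G$ with $\delta+1$ colors directly, but invoking the classical theorem---as is done with K\"{o}nig's Line Coloring Theorem in Lemma \ref{Lem4.1}---is the cleaner route.)
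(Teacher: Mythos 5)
Your proof is correct, and it follows the same overall strategy as the paper: rule out a proper edge-coloring with $\delta$ colors, then invoke Vizing's Theorem for the upper bound $\chi'(G)\leq\delta+1$. The only difference is the mechanism for the lower bound. You argue structurally: in a proper $\delta$-edge-coloring of a $\delta$-regular graph, the $\delta$ edges at each vertex use all $\delta$ colors, so every color class is a perfect matching, which is impossible when $\lvert V(G)\rvert$ is odd. The paper instead counts: each color class is a matching, hence has at most $\lfloor m/2\rfloor=\frac{m-1}{2}$ edges when $m$ is odd, so a proper coloring with $s$ colors forces $\frac{1}{2}m\delta\leq s\frac{m-1}{2}$, i.e.\ $s\geq\delta\frac{m}{m-1}>\delta$. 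Both hinge on the same parity fact about matchings in a graph with an odd number of vertices; your version is a clean structural contradiction tailored to excluding exactly $\delta$ colors, while the paper's inequality gives a (marginally more general) quantitative lower bound on $s$ for any proper edge-coloring. Either argument completes the lemma.
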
 
\begin{proof} 
Let $G$ be a $\delta$-regular simple graph with $m$ vertices, where $m$ is an odd positive integer. If there exists a proper edge-coloring of $G$ with $s$ colors, then no color can be used more than $\displaystyle{\left\lfloor m/2\right\rfloor}$ times. This implies that the number of edges of $G$ cannot exceed $\displaystyle{s\left\lfloor m/2\right\rfloor}$. As the number of edges of $G$ is $m\delta/2$, we have $m\delta/2\leq s\left\lfloor m/2\right\rfloor=s(m-1)/2$. Consequently, $\displaystyle{s\geq \delta\frac{m}{m-1}>\delta}$. By the definition of $\chi'$, there exists a proper edge-coloring of $G$ with $\chi'(G)$ colors, so $\chi'(G)>\delta$. Vizing's theorem states that $\chi'(G)\leq\delta+1$, so we conclude that $\chi'(G)=\delta+1$. 
\end{proof}  
\begin{theorem} \label{Thm4.4} 
If $I$ has a prime ideal divisor whose index in $R$ is a power of $2$, then $\chi'(G_{R/I})=\varphi(R/I)$. Otherwise, $\chi'(G_{R/I})=\varphi(R/I)+1$.
\end{theorem}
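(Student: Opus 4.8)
The plan is to treat the two alternatives in the statement separately, using Theorem~\ref{Thm4.1} together with Lemmas~\ref{Lem4.1} and~\ref{Lem4.2}. The first step is an elementary observation: $I$ has a prime ideal divisor whose index in $R$ is a power of $2$ if and only if $\vert R/I\vert$ is even. Indeed, each $R/P$ is a finite field, so $\vert R/P\vert$ is a prime power, and a prime power is even precisely when it is a power of $2$; since $\vert R/I\vert$ equals the product of the numbers $\vert R/P_i\vert^{\alpha_i}$ taken over the prime power factors $P_i^{\alpha_i}$ of $I$, this product is even exactly when one of the $\vert R/P_i\vert$ is even. By Theorem~\ref{Thm4.1}, $G_{R/I}$ is $\varphi(R/I)$-regular in either case, and $\varphi(R/I)$ is a positive integer because $1+I\in U(R/I)$.

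Suppose first that $I$ has no prime ideal divisor of even index, so that $\vert R/I\vert$ is odd. Then $G_{R/I}$ is a $\varphi(R/I)$-regular graph whose vertex set $R/I$ has odd cardinality, so Lemma~\ref{Lem4.2} applies directly and gives $\chi'(G_{R/I})=\varphi(R/I)+1$, which is the desired conclusion in this case.

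Now suppose $I$ has a prime ideal divisor $P$ with $\vert R/P\vert$ a power of $2$, and set $m=\vert R/P\vert$, an even integer that is at least $2$. I will check that $G_{R/I}$ satisfies the hypotheses of Lemma~\ref{Lem4.1} with respect to the partition of $R/I$ in which $a+I$ and $b+I$ lie in the same block exactly when $a-b\in P$. This partitions the $\vert R/I\vert$ vertices into $m$ blocks, one for each coset of $P$, each of cardinality $\vert R/I\vert/m$; and if $a+I$ and $b+I$ lie in a common block then $a-b\in P$, so $a-b\notin U(R/I)$ and the two vertices are nonadjacent, which is property~(a). For property~(b), write $I=P^{\alpha}P_2^{\alpha_2}\cdots P_s^{\alpha_s}$ with $P,P_2,\ldots,P_s$ distinct prime ideals, fix a vertex $v=a+I$, and fix a coset $c+P$ of $P$ with $c+P\neq a+P$. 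Using the Chinese Remainder Theorem to identify $R/I$ with $R/P^{\alpha}\times\prod_{j=2}^s R/P_j^{\alpha_j}$, the vertices $b+I$ lying in the block corresponding to $c+P$ are exactly those whose $R/P^{\alpha}$-component reduces to $c+P$ in $R/P$ and whose remaining components are arbitrary; such a vertex is adjacent to $v$ if and only if $a-b\notin P_j$ for each $j\in\{2,\ldots,s\}$, the condition $a-b\notin P$ being automatic since $b+P=c+P\neq a+P$. Counting the admissible components (there are $\vert R/P\vert^{\alpha-1}$ choices for the $R/P^{\alpha}$-component and $\vert R/P_j\vert^{\alpha_j-1}(\vert R/P_j\vert-1)$ choices for each $R/P_j^{\alpha_j}$-component) shows that $v$ has exactly $\vert R/P\vert^{\alpha-1}\prod_{j=2}^s\vert R/P_j\vert^{\alpha_j-1}(\vert R/P_j\vert-1)$ neighbors in that block, a number that does not depend on the chosen coset $c+P$. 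This is property~(b), so Lemma~\ref{Lem4.1} gives $\chi'(G_{R/I})=\Delta(G_{R/I})=\varphi(R/I)$, as $G_{R/I}$ is $\varphi(R/I)$-regular.

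The only step that needs real care is the verification of property~(b): one must push the unit condition defining the edges of $G_{R/I}$ through the Chinese Remainder Theorem and confirm that the count of neighbors inside a block depends only on which prime divisor of $I$ is used to build the partition, not on the particular block. Everything else is bookkeeping, together with the parity observation identifying "power of $2$" with the evenness of $\vert R/I\vert$; as a consistency check, summing the block counts over the $m-1$ blocks other than that of $v$ recovers $\varphi(R/I)$, in agreement with the regularity of $G_{R/I}$.
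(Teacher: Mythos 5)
Your proposal is correct and follows essentially the same route as the paper: partition the vertices by cosets of a prime divisor $P$ of index a power of $2$, verify properties (a) and (b) of Lemma~\ref{Lem4.1} via the Chinese Remainder Theorem (your per-prime count $\vert R/P\vert^{\alpha-1}\prod_j\vert R/P_j\vert^{\alpha_j-1}(\vert R/P_j\vert-1)$ is just the paper's $m^{\alpha-1}\varphi(R/J)$ written out), and apply Lemma~\ref{Lem4.2} with Theorem~\ref{Thm4.1} in the odd-order case. The explicit parity observation at the start is a nice touch but matches what the paper does implicitly.
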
 
\begin{proof} 
First, suppose $I$ has a prime ideal divisor $P$ such that $\vert R/P\vert=2^k=m$ for some positive integer $k$. Let $R/P=\{a_1+P, a_2+P,\ldots,a_m+P\}$. Let us write $I=P^{\alpha}J$, where $\alpha$ is a positive integer and $J\not\subseteq P$. Define a homomorphism $\psi\colon R/I\rightarrow R/P$ by $\psi\colon a+I\mapsto a+P$, and, for each positive integer $i\leq m$, let $A_i=\psi^{-1}(a_i+P)=\{a+I\in R/I\colon a-a_i\in P\}$. The sets $A_i$, all of which have the same cardinality, partition the vertices of $G_{R/I}$. Furthermore, no two vertices in the same set $A_i$ are adjacent. Hence, $G_{R/I}$ satisfies property $(a)$ of Lemma \ref{Lem4.1}. Now, fix some distinct $i,j\in\{1,2,\ldots,m\}$ and some vertex $v+I\in A_i$. There are exactly $m^{\alpha-1}$ elements $w+P^{\alpha}$ of $R/P^{\alpha}$ such that $w-a_j\in P$, and every one of those elements satisfies $w-v\not\in P$ because $v-a_i\in P$ and $a_i-a_j\not\in P$. Also, if we view the set $v+(R/J)^\times=\{v+s+J\in R/J\colon s+J\in (R/J)^\times\}$ as a coset of the subgroup $(R/J)^\times$ of $R/J$, then we see that there are exactly $\varphi(R/J)$ elements $w+J$ of $R/J$ such that $w-v+J\in (R/J)^\times$. A vertex $w+I$ of $G_{R/I}$ is an element of $A_j$ that is adjacent to $v+I$ if and only if $w-a_j\in P$, $w-v\not\in P$, and $w-v+J\in (R/J)^\times$. Because $R/I\cong R/P^{\alpha}\oplus R/J$, we see that there are exactly $m^{\alpha-1}\varphi(R/J)$ such vertices. This number does not depend on the choice of $j$ (so long as $j\neq i$), so $G_{R/I}$ satisfies property $(b)$ of Lemma \ref{Lem4.1}. Therefore, by Lemmas \ref{Thm4.1} and \ref{Lem4.1}, $\chi'(G_{R/I})=\Delta(G_{R/I})=\varphi(R/I)$.

Now, suppose that $I$ does not have a prime ideal divisor whose index in $R$ is a power of $2$. Let us write $I=Q_1^{\alpha_1}Q_2^{\alpha_2}\cdots Q_t^{\alpha_t}$, where $Q_1,Q_2,\ldots,Q_t$ are prime ideals of $R$ and $\alpha_1,\alpha_2,\ldots,\alpha_t$ are positive integers. For each positive integer $i\leq t$, $R/Q_i$ is a finite field, so $\vert R/Q_i\vert$ must be a power of an odd prime. Hence, 
$\displaystyle{\vert R/I\vert=\prod_{i=1}^t\vert R/Q_i\vert^{\alpha_i}}$
is odd, so $G_{R/I}$ has an odd number of vertices. Using Lemma \ref{Thm4.1} and Lemma \ref{Lem4.2}, we conclude that $\chi'(G_{R/I})=\varphi(R/I)+1$.        
\end{proof}      
\begin{theorem} \label{Thm4.5} 
Let $\lambda(I)$ denote the number of distinct prime ideal divisors of $I$. If $I$ is a prime ideal, then $\gamma_{cl}(G_{R/I})=1$. If $I$ is not a prime ideal and $Q(I)>\lambda(I)$, then $\gamma_{cl}(G_{R/I})=\lambda(I)+1$. If $I$ is not prime and $Q(I)\leq\lambda(I)$, then $\gamma_{cl}(G_{R/I})$ does not exist. Furthermore, if $\gamma_{cl}(G_{R/I})$ exists, then every clique of $G_{R/I}$ of order $\gamma_{cl}(G_{R/I})$ dominates $G_{R/I}$. 
\end{theorem}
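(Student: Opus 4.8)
The plan is to split into the three stated cases, with a single technical observation carrying most of the weight. The case in which $I$ is prime is immediate: then $R/I$ is a finite field of order at least $2$, so $G_{R/I}$ is a complete graph in which every vertex is adjacent to all of the others; hence every clique of order $1$ dominates, $\gamma_{cl}(G_{R/I})=1$, and the final ``furthermore'' clause holds trivially here.

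For the other two cases I would first prove the key claim: \emph{if $I$ is not prime, then no clique of $G_{R/I}$ of order $s\le\lambda(I)$ dominates $G_{R/I}$.} Write $I=P_1^{\alpha_1}\cdots P_{\lambda}^{\alpha_{\lambda}}$ with $\lambda=\lambda(I)$, and let $x_1+I,\ldots,x_s+I$ be the vertices of such a clique. If $s\ge 2$, I would choose an injection $\sigma\colon\{1,\ldots,s\}\to\{1,\ldots,\lambda\}$ (possible since $s\le\lambda$) and invoke the Chinese Remainder Theorem to produce $w\in R$ with $w\equiv x_i\pmod{P_{\sigma(i)}^{\alpha_{\sigma(i)}}}$ for every $i$. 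Then $w-x_i\in P_{\sigma(i)}$, so $w+I$ is adjacent to no vertex of the clique; and $w+I$ is not itself a clique vertex, since $w+I=x_j+I$ would give, for any $i\ne j$, that $x_i-x_j\in P_{\sigma(i)}$ (using $w-x_j\in I\subseteq P_{\sigma(i)}$), making $x_i-x_j+I$ a non-unit of $R/I$ and contradicting completeness of the clique. The degenerate subcase $s=1$ must be handled separately, since no clique edge is available: there I would simply observe that a non-prime quotient $R/I$ is not a field, hence has more than one non-unit, so the single vertex of the clique is non-adjacent to some other vertex.

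Granting the key claim, the case $Q(I)\le\lambda(I)$ with $I$ not prime is immediate: by Theorem \ref{Thm4.3} every clique has order at most $\omega(G_{R/I})=Q(I)\le\lambda(I)$, so the claim shows no clique dominates and $\gamma_{cl}(G_{R/I})$ does not exist. For the case $Q(I)>\lambda(I)$ with $I$ not prime, Theorem \ref{Thm4.3} gives $\omega(G_{R/I})=Q(I)\ge\lambda(I)+1$, so cliques of order $\lambda(I)+1$ exist, while the claim rules out dominating cliques of any smaller order. It then remains to check that \emph{every} clique of order $\lambda(I)+1$ dominates, which is a short pigeonhole argument: if $w+I$ were non-adjacent to all $\lambda(I)+1$ clique vertices $x_i+I$ and distinct from each, then for every $i$ we could select a prime divisor $P_{k(i)}$ of $I$ with $w-x_i\in P_{k(i)}$, and since there are only $\lambda(I)$ such primes, two indices would share one, forcing $x_i-x_{i'}\in P_{k(i)}$ and contradicting completeness of the clique. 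This yields $\gamma_{cl}(G_{R/I})=\lambda(I)+1$ together with the ``furthermore'' clause.

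I expect the main difficulty to lie in the key claim: being non-adjacent to the clique vertices is the easy half, whereas verifying that the element $w$ produced by the Chinese Remainder Theorem lands genuinely \emph{outside} the clique requires the clique-edge contradiction, and this in turn forces one to isolate the case $s=1$ and treat it by the separate observation that $R/I$ fails to be a field.
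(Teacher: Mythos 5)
Your proposal is correct and follows essentially the same route as the paper: the prime case via completeness, a Chinese Remainder Theorem construction showing no clique of order at most $\lambda(I)$ dominates when $I$ is not prime, Theorem \ref{Thm4.3} plus a pigeonhole argument showing every clique of order $\lambda(I)+1$ dominates when $Q(I)>\lambda(I)$, and the clique-number bound ruling out dominating cliques when $Q(I)\leq\lambda(I)$. The only cosmetic differences are in the $s=1$ subcase (the paper uses regularity plus non-completeness rather than the existence of a nonzero non-unit) and in verifying that the CRT-produced vertex lies outside the clique (the paper notes it is adjacent to nothing in the clique while clique vertices are adjacent to each other).
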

\begin{proof} 
Because $R$ is a Dedekind domain, $R/I$ is a field if and only if $I$ is prime. In other words, $G_{R/I}$ is complete if and only if $I$ is prime. Therefore, if $I$ is prime, any single vertex of $G_{R/I}$ forms a dominating clique. Now, suppose $I$ is not prime. Let $P_1,P_2,\ldots,P_{\lambda(I)}$ be the prime ideal divisors of $I$, and assume that $C=\{v_1+I,v_2+I,\ldots,v_t+I\}$ is a clique of $G_{R/I}$ of order $t\leq\lambda(I)$. If $t=1$, then we know we may find some vertex of $G_{R/I}$ other than $v_1+I$ that is not adjacent to $v_1+I$ because $G_{R/I}$ is regular and not complete. Therefore, no clique of order $1$ dominates $G_{R/I}$. Suppose $t>1$. By the Chinese remainder theorem, we know that we may find some $z+I\in R/I$ such that $z+P_i=v_i+P_i$ for all positive integers $i\leq t$. This implies that $z+I$ is not adjacent to any element of $C$. Then, because any vertex in $C$ is adjacent to all other vertices in $C$, $z+I$ cannot be in $C$. Thus, $C$ does not dominate $G_{R/I}$, so we conclude that no clique of order $t\leq\lambda(I)$ can dominate $G_{R/I}$ when $I$ is not prime. 

Now, suppose $I$ is not prime and $Q(I)>\lambda(I)$. Again, let $P_1,P_2,\ldots,P_{\lambda(I)}$ be the prime ideal divisors of $I$. There exists at least one clique of $G_{R/I}$ of order $\lambda(I)+1$ because $\lambda(I)+1\leq Q(I)=\omega(G_{R/I})$ (by Fact \ref{Thm4.3}), so we may let $D$ be an arbitrary clique of $G_{R/I}$ of order $\lambda(I)+1$. We will show that $D$ dominates $G_{R/I}$. Suppose, for the sake of finding a contradiction, that there is some vertex $z+I\in R/I$ that is not adjacent to any of the vertices of $D$. By the pigeonhole principle, there must be some prime ideal divisor $P_i$ of $I$ and some distinct $a+I,b+I\in D$ such that $z-a\in P_i$ and $z-b\in P_i$. Then $a-b\in P_i$, which contradicts the fact that $a-b+I\in (R/I)^\times$ because $D$ is a clique. 
\par 
Finally, suppose $I$ is not prime and $Q(I)\leq\lambda(I)$. Because the clique number of $G_{R/I}$ is $Q(I)$ by Fact \ref{Thm4.3}, we see that there are no cliques of order larger than $\lambda(I)$. Thus, no clique of $G_{R/I}$ can dominate $G_{R/I}$, so $\gamma_{cl}(G_{R/I})$ does not exist. 
\end{proof} 
If $Q(I)\geq 3$, then $g(G_{R/I})=3$ because $\omega(G_{R/I})\geq 3$ by Fact \ref{Thm4.3}. On the other hand, if $Q(I)=2$, then Fact \ref{Thm4.2} implies that $G_{R/I}$ contains no odd cycles. Therefore, if $Q(I)=2$, then $g(G_{R/I})\geq 4$. The following theorem shows that $g(G_{R/I})\leq 4$ for many generalized totient graphs $G_{R/I}$.  
\begin{theorem} \label{Thm4.6} 
Let $P=(p)$ be a prime principal ideal of $R$, and let $J$ be an ideal of $R$ that is not contained in $P$. Let $I=P^{\alpha}J$, where $\alpha$ is a positive integer. If $\alpha>1$ or if $p-1,p+1\not\in J$, then $G_{R/I}$ has a cycle of length $4$.  
\end{theorem}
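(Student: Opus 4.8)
The plan is to exhibit a concrete $4$-cycle, splitting into two regimes according to the size of $U(R/I)$. Write $\overline{x}$ for the coset $x+I$. The first, generic construction is this: if $R/I$ has a unit $\overline{u}$ other than $\overline{1}$ and $\overline{-1}$, then the four vertices $\overline{0},\overline{1},\overline{1+u},\overline{u}$ are distinct --- $\overline{u}\neq\overline{0}$ because a unit is nonzero, $\overline{u}\neq\overline{1},\overline{-1}$ by choice, and the remaining inequalities follow at once from $R/I$ being nontrivial --- and their consecutive differences $\overline{-1},\overline{-u},\overline{1},\overline{u}$ are all units, so $\overline{0}\,\overline{1}\,\overline{1+u}\,\overline{u}\,\overline{0}$ is a $4$-cycle. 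Such a $\overline{u}$ fails to exist only when $U(R/I)\subseteq\{\overline{1},\overline{-1}\}$; so from now on I would assume this inclusion, which in particular gives $\varphi(R/I)\le 2$.

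Under that assumption I would argue that the hypothesis forces the much more rigid conclusion that $2$ lies in every prime ideal divisor of $I$ but not in $I$ itself. Write $I=P_1^{e_1}\cdots P_n^{e_n}$ as a product of powers of distinct (hence maximal) prime ideals with $P=P_1$ and $e_1=\alpha$, so that $P_2,\dots,P_n$ are exactly the prime divisors of $J$; by Definition \ref{Def2.2} and Remark \ref{Rem2.2}, $\varphi(R/I)=\prod_{i=1}^{n}|R/P_i|^{e_i-1}(|R/P_i|-1)$, each factor being $1$ precisely when $|R/P_i|=2$ and $e_i=1$ and being at least $2$ otherwise; since $\varphi(R/I)\le 2$, at most one factor exceeds $1$. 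Suppose, for contradiction, that $2\notin P_i$ for some $i$. Then $|R/P_i|\ge 3$, so the $P_i$-factor is $\ge 2$, hence equals $2$; this forces $|R/P_i|=3$, $e_i=1$, and $|R/P_l|=2,\ e_l=1$ for every other $l$. In particular $\alpha=e_1=1$, so the hypothesis yields $p-1,p+1\notin J$. But each $P_l$ with $l\ge 2$ is a maximal ideal distinct from $(p)$, so $p\notin P_l$; a brief residue-field computation --- noting that $-1\equiv 1$ in the fields of size $2$ and that at most one $P_l$ has residue field of size $3$ --- produces a single $\epsilon\in\{1,-1\}$ with $p-\epsilon\in P_l$ for all $l\ge 2$, whence $p-\epsilon\in\bigcap_{l\ge 2}P_l=J$ (this being $R$ vacuously when $n=1$), contradicting the hypothesis. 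Hence $2$ lies in every prime divisor of $I$.

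If moreover $2\in I$, then $\overline{2}=\overline{0}$, so $\overline{-1}=\overline{1}$ and $U(R/I)=\{\overline{1}\}$, i.e. $\varphi(R/I)=1$; this forces $|R/P_i|=2$ and $e_i=1$ for all $i$, so again $\alpha=1$ and the same residue-field argument gives $p-1\in J$, once more contradicting the hypothesis. Thus $2\notin I$ while $2$ lies in every prime divisor of $I$. Consequently $3=1+2\equiv 1\pmod{P_i}$ for each prime divisor $P_i$ of $I$, so $\overline{3}\in U(R/I)$; and $\overline{0},\overline{1},\overline{2},\overline{3}$ are distinct because $1,2,3\notin I$. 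Since their consecutive differences $\overline{-1},\overline{-1},\overline{-1},\overline{3}$ are all units, $\overline{0}\,\overline{1}\,\overline{2}\,\overline{3}\,\overline{0}$ is a $4$-cycle, which completes the proof.

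The routine parts are the two ``distinct vertices with unit differences'' verifications and the bookkeeping with the product formula for $\varphi(R/I)$. I expect the crux to be the contradiction in the second paragraph: one must invoke $p\pm 1\notin J$ precisely, and it is exactly the borderline configurations --- where $R/I$ is, up to isomorphism, $\mathbb{Z}/(4)$ times a product of copies of a two-element field --- in which the generic construction fails, so that ruling out the sporadic $|R/P_i|=3$ exception (which is what the hypothesis accomplishes once $\alpha=1$) is what both salvages those cases and supplies the element $\overline{2}$ needed for the second construction.
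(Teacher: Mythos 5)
Your proof is correct, but it takes a genuinely different route from the paper's. The paper gives one uniform construction: choosing $y\in J\setminus P$, it checks that $I$, $p^2-y+I$, $p^2+p+I$, $p-y+I$ are pairwise adjacent in consecutive pairs, and uses the hypothesis ($\alpha>1$, or $\alpha=1$ with $p\pm 1\notin J$) only to verify that the two pairs of opposite vertices are distinct. You instead split on the unit group: whenever $U(R/I)$ contains some $\overline{u}\neq\pm\overline{1}$ you get the $4$-cycle $\overline{0},\overline{1},\overline{1+u},\overline{u}$ with no use of the hypothesis (and no use of the principality of $P$), and in the degenerate case $U(R/I)\subseteq\{\pm\overline{1}\}$ you use the product formula $\varphi(R/I)=\prod_i\vert R/P_i\vert^{e_i-1}(\vert R/P_i\vert-1)\leq 2$ to pin down the possible shapes of $R/I$, then use the hypothesis (which, since these shapes force $\alpha=1$, reduces to $p\pm 1\notin J$) to eliminate the shapes isomorphic to $\mathbb{F}_2^k$ and $\mathbb{F}_3\times\mathbb{F}_2^k$ via the residue computation $p\equiv \pm 1$ modulo each $P_l\mid J$, leaving only the case where $2$ lies in every prime divisor of $I$ but not in $I$, where $\overline{0},\overline{1},\overline{2},\overline{3}$ is a $4$-cycle; I checked the residue-field step and the two distinctness verifications, and they hold. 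What each approach buys: the paper's argument is shorter and uniform, with the generator $p$ doing all the work; yours isolates exactly where the hypothesis matters, showing that a $4$-cycle exists unconditionally unless $U(R/I)\subseteq\{\pm\overline{1}\}$ (i.e.\ unless $R/I$ is one of a very short list of small rings), which is a sharper structural statement and bears on the question raised in the concluding remarks about which generalized totient graphs have girth exceeding $4$.
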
 
\begin{proof} 
Suppose $\alpha>1$ or $p-1,p+1\not\in J$. Let $y$ be an element of $J$ that is not in $P$. Consider the vertices $V_1=I$, $V_2=p^2-y+I$, $V_3=p^2+p+I$, and $V_4=p-y+I$. Suppose $V_1$ and $V_2$ are not adjacent. Then $p^2-y$ is an element of some prime ideal divisor $Q$ of $I$. Since $y\not\in P$, $p^2-y\not]in P$. Hence, $Q$ is a prime ideal divisor of $J$. This implies that $p^2\in Q$ because $y\in J\subseteq Q$. We then have $P^2=(p^2)\subseteq Q$, so $Q$ is a prime ideal divisor of $P^2$. As $P$ is a prime ideal, we must have $P=Q\supseteq J$, which contradicts our hypothesis that $J\not\subseteq P$. Hence, $V_1$ and $V_2$ are adjacent. Similar arguments show that 
$V_2$ is adjacent to $V_3$, $V_3$ is adjacent to $V_4$, and $V_4$ is adjacent to $V_1$. Therefore, if $V_1$, $V_2$, $V_3$, and $V_4$ are all distinct, they form a cycle of length $4$. We know that $V_1\neq V_2$, $V_2\neq V_3$, $V_3\neq V_4$, and $V_4\neq V_1$ because no vertex of $G_{R/I}$ can be adjacent to itself. Hence, it suffices to show that $V_1\neq V_3$ and $V_2\neq V_4$. 

Assume $\alpha>1$. As $1\not\in P$ and $P^{\alpha-1}\subseteq P$, $p-1\not\in P^{\alpha-1}$. This implies that $p^2-p\not\in P^{\alpha}$, so $(p^2-y)-(p-y)=p^2-p\not\in I$. Thus, $V_2\neq V_4$. Similarly, $p^2+p\not\in I$, so $V_1\neq V_3$. 

Suppose, now, that $\alpha=1$. Then $p-1\not\in J$, so $(p^2-y)-(p-y)=p(p-1)\not\in (p)J=I$. This shows that $V_2\neq V_4$. Similarly, $p+1\not\in J$, so $p^2+p=p(p+1)\not\in (p)J=I$. This implies that $V_1\neq V_3$.    
\end{proof} 
\begin{corollary} \label{Cor4.1}
If $n\geq 3$ is an integer, then 
$\displaystyle{g(G_{\mathbb{Z}/(n)})=\begin{cases} 3, & \mbox{if } 2\nmid n; \\ 4, & \mbox{if } 2\vert n, n\neq 6; \\ 6, & \mbox{if } n=6. \end{cases}}$
\end{corollary}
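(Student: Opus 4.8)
The plan is to reduce the statement to three facts already established: Theorem~\ref{Thm4.3} (which gives $\omega(G_{R/I}) = Q(I)$), Theorem~\ref{Thm4.2} (bipartiteness exactly when $Q(I) = 2$), and Theorem~\ref{Thm4.6} (existence of a $4$-cycle). The only preliminary remark needed is that for $R = \mathbb{Z}$ and $I = (n)$, the quantity $Q\bigl((n)\bigr)$ is the smallest prime divisor of $n$, since $\lvert\mathbb{Z}/(p)\rvert = p$ for every prime $p \mid n$.

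First I would dispose of the odd case: if $2 \nmid n$, then every prime divisor of $n$ is at least $3$, so $Q\bigl((n)\bigr) \geq 3$, and by Theorem~\ref{Thm4.3} the graph contains a clique of order at least $3$, hence a $3$-cycle; thus $g(G_{\mathbb{Z}/(n)}) = 3$. This is precisely the observation recorded in the paragraph preceding the corollary. Next, when $2 \mid n$ we have $Q\bigl((n)\bigr) = 2$, so $G_{\mathbb{Z}/(n)}$ is bipartite by Theorem~\ref{Thm4.2}, has no odd cycles, and therefore satisfies $g(G_{\mathbb{Z}/(n)}) \geq 4$.

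To pin the girth to $4$ when $n$ is even and $n \neq 6$, I would write $n = 2^{\alpha} m$ with $m$ odd, so that $(n) = (2)^{\alpha}(m)$ with $(m) \not\subseteq (2)$, and apply Theorem~\ref{Thm4.6} with $P = (2)$, $p = 2$, and $J = (m)$. That theorem produces a $4$-cycle as soon as $\alpha > 1$ or $p-1 = 1 \notin (m)$ and $p+1 = 3 \notin (m)$. If $\alpha \geq 2$ we are done immediately. If $\alpha = 1$, then $n = 2m \geq 4$ forces $m \geq 2$, hence $m \geq 3$ since $m$ is odd, and $n \neq 6$ forces $m \neq 3$, so $m \geq 5$; then $m \nmid 1$ and $m \nmid 3$, i.e.\ $1,3 \notin (m)$. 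Either way Theorem~\ref{Thm4.6} yields a $4$-cycle, and combined with $g \geq 4$ this gives $g(G_{\mathbb{Z}/(n)}) = 4$.

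Finally, for $n = 6$ I would argue directly: $U(\mathbb{Z}/(6)) = \{1,5\}$, which is exactly the set of residues $\equiv \pm 1 \imod{6}$, so two vertices of $G_{\mathbb{Z}/(6)}$ are adjacent precisely when they differ by $\pm 1$ modulo $6$; hence the edge set is $\{\{0,1\},\{1,2\},\{2,3\},\{3,4\},\{4,5\},\{5,0\}\}$ and $G_{\mathbb{Z}/(6)}$ is the $6$-cycle, of girth $6$. There is essentially no obstacle here; the only subtlety is recognizing that $n = 6$ (together with $n = 2$, which is excluded by $n \geq 3$) is exactly the value for which the hypotheses of Theorem~\ref{Thm4.6} fail in the $\alpha = 1$ case, so it must be treated by this separate hands-on computation, while everything else is bookkeeping with the smallest-prime-divisor description of $Q$.
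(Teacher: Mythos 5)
Your proposal is correct and follows essentially the same route as the paper: girth $3$ for odd $n$ via $Q((n))\geq 3$, girth $\geq 4$ for even $n$ via bipartiteness, a $4$-cycle from Theorem~\ref{Thm4.6} with $p=2$, $J=(m)$, and a direct check that $G_{\mathbb{Z}/(6)}$ is a $6$-cycle. The only difference is that you spell out the verification of the hypotheses of Theorem~\ref{Thm4.6} in the $\alpha=1$ case (namely $m\geq 5$, so $1,3\notin(m)$), which the paper leaves implicit.
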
 
\begin{proof} 
If $2\nmid n$, then it follows from the paragraph immediately preceding Theorem \ref{Thm4.6} that $g(G_{\mathbb{Z}/(n)})=3$ because $Q((n))\geq 3$. It is easy to see that $g(G_{\mathbb{Z}/(6)})=6$ because $G_{\mathbb{Z}/(6)}$ is a cycle of length $6$. Assume, now, that $2\vert n$ and $n\neq 6$. Because $2\vert n$, $g(G_{\mathbb{Z}/(n)})\neq 3$. Write $n=2^{\alpha}m$ for some positive integers $m$ and $\alpha$ with $m$ odd. Setting $p=2$ and $J=(m)$ in Theorem \ref{Thm4.6} shows that there is a cycle of length $4$ in $G_{\mathbb{Z}/(n)}$, so $g(G_{\mathbb{Z}/(n)})=4$. 
\end{proof}  
\par 
\section{Diameters and Disconnectedness}
Klotz and Sander have determined the diameters of all Euler totient Cayley graphs \cite{Klotz07}, so we will do the same for generalized totient graphs. We wish to acknowledge that the proofs of Lemma \ref{Lem6.1}, Theorem \ref{Thm6.1}, and Theorem \ref{Thm6.3} are inspired by proofs that Klotz and Sander used to establish similar results in the specific case when $R=\mathbb{Z}$. 
\begin{definition} \label{Def6.1} 
For each prime ideal $P$ of $R$ and element $s$ of $R$, define $\varepsilon(P,s)$ by \[\varepsilon(P,s)=\begin{cases} 1, & \mbox{if } s\in P; \\ 2, & \mbox{if } s\not\in P. \end{cases}\]
Let $I=P_1^{\alpha_1}P_2^{\alpha_2}\cdots P_t^{\alpha_t}$ be the (unique) factorization of $I$ into a product of powers of distinct prime ideals. We define $F\colon R\rightarrow\mathbb{Z}$ by \[F(s)=\vert R/I\vert\prod_{i=1}^t\left(1-\frac{\varepsilon(P_i,s)}{\vert R/P_i\vert}\right).\] 
\end{definition} 
The function $F$ clearly depends on the choice of $R$ and the choice of $I$, but we trust that this will not lead to confusion. 
\begin{lemma} \label{Lem6.1} 
For any $a,b\in R$, the vertices $a+I$ and $b+I$ of $G_{R/I}$ have $F(a-b)$ common neighbors. 
\end{lemma}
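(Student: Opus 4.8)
The plan is to count, for a fixed pair of representatives $a,b\in R$, the number of $w+I\in R/I$ with both $w-a+I\in U(R/I)$ and $w-b+I\in U(R/I)$, and to show that this count equals $F(a-b)$. Since $a+I$ and $b+I$ themselves will automatically be excluded whenever they could have been counted (because $0\notin U(R/I)$, so $w=a$ and $w=b$ fail the respective conditions), no separate bookkeeping is needed for them, and the count is exactly the number of common neighbors.

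First I would invoke the factorization $I=P_1^{\alpha_1}\cdots P_t^{\alpha_t}$ and the Chinese Remainder isomorphism $R/I\cong\bigoplus_{i=1}^t R/P_i^{\alpha_i}$, so that choosing $w+I$ amounts to choosing, independently, cosets $w+P_i^{\alpha_i}$ for each $i$. The condition ``$w-a\notin P_j$ for all $j$'' is equivalent, coordinate by coordinate, to $w-a\notin P_i$ (using $P_i\supseteq P_i^{\alpha_i}\supseteq I$), and similarly for $b$. Hence the total count factors as $\prod_{i=1}^t M_i$, where $M_i$ is the number of cosets $w+P_i^{\alpha_i}$ of $R/P_i^{\alpha_i}$ with $w-a\notin P_i$ and $w-b\notin P_i$.

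Next I would evaluate $M_i$ locally. As in the proof of Theorem~\ref{Thm2.2} and Lemma~\ref{Lem2.3}, the natural map $\psi\colon R/P_i^{\alpha_i}\to R/P_i$ is $\vert R/P_i\vert^{\alpha_i-1}$-to-$1$, so $M_i=\vert R/P_i\vert^{\alpha_i-1}\cdot m_i$, where $m_i$ is the number of elements of the field $R/P_i$ avoiding both $a+P_i$ and $b+P_i$. Here the single case split enters: if $a-b\in P_i$, then $a+P_i=b+P_i$ is a single forbidden value, so $m_i=\vert R/P_i\vert-1$; if $a-b\notin P_i$, then $a+P_i\neq b+P_i$ are two distinct forbidden values, so $m_i=\vert R/P_i\vert-2$. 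In both cases $m_i=\vert R/P_i\vert-\varepsilon(P_i,a-b)$, so $M_i=\vert R/P_i\vert^{\alpha_i-1}(\vert R/P_i\vert-\varepsilon(P_i,a-b))=\vert R/P_i^{\alpha_i}\vert\bigl(1-\tfrac{\varepsilon(P_i,a-b)}{\vert R/P_i\vert}\bigr)$.

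Finally I would multiply: using $\vert R/I\vert=\prod_{i=1}^t\vert R/P_i^{\alpha_i}\vert$, the count becomes $\prod_{i=1}^t M_i=\vert R/I\vert\prod_{i=1}^t\bigl(1-\tfrac{\varepsilon(P_i,a-b)}{\vert R/P_i\vert}\bigr)=F(a-b)$, as claimed. I do not anticipate a serious obstacle; the only point requiring a little care is the case split yielding $\varepsilon$, and the observation that $a+I,b+I$ need not be handled separately since they are never counted in the first place.
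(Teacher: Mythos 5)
Your proposal is correct and follows essentially the same route as the paper: factor $I$ into prime powers, reduce to counting residues modulo each $P_i^{\alpha_i}$ via CRT, use the $\vert R/P_i\vert^{\alpha_i-1}$-to-$1$ reduction map to $R/P_i$, and observe that the forbidden residues $a+P_i$, $b+P_i$ number exactly $\varepsilon(P_i,a-b)$. The only cosmetic difference is that you make explicit the (true but unstated) remark that $a+I$ and $b+I$ are automatically excluded from the count.
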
 
\begin{proof} 
As before, we will let $I=P_1^{\alpha_1}P_2^{\alpha_2}\cdots P_t^{\alpha_t}$ be the factorization of $I$ into a product of powers of distinct prime ideals. Fix some $a,b\in R$, and note that a vertex $c+I$ is a common neighbor of $a+I$ and $b+I$ in $G_{R/I}$ if and only if $c-a\not\in P_i$ and $c-b\not\in P_i$ for all $i\in\{1,\ldots,t\}$. For each $i\in\{1,\ldots,t\}$, let $N_i$ be the number of common neighbors of $a+P_i^{\alpha_i}$ and $b+P_i^{\alpha_i}$ in $G_{R/P_i^{\alpha_i}}$. It follows from the Chinese remainder theorem that the number of common neighbors of $a+I$ and $b+I$ in $G_{R/I}$ is $N_1N_2\cdots N_t$. Now, choose some $i\in\{1,\ldots,t\}$ so that we may evaluate $N_i$. We use the natural homomorphism $\psi_i\colon R/P_i^{\alpha_i}\rightarrow R/P_i$ defined by $\psi_i\colon v+P_i^{\alpha_i}\mapsto v+P_i$, which we know is a $k$-to-$1$ mapping with $k=\vert R/P_i\vert^{\alpha_i-1}$. A vertex $c+P_i^{\alpha_i}$ is a common neighbor of $a+P_i^{\alpha_i}$ and $b+P_i^{\alpha_i}$ in $G_{R/P_i^{\alpha_i}}$ if and only if $\psi_i(c+P_i^{\alpha_i})\neq a+P_i$ and $\psi_i(c+P_i^{\alpha_i})\neq b+P_i$. Therefore, if we wish to choose $c+P_i^{\alpha_i}$ to be a common neighbor of  $a+P_i^{\alpha_i}$ and $b+P_i^{\alpha_i}$ in $G_{R/P_i^{\alpha_i}}$, then there are exactly $\varepsilon(P_i,a-b)$ elements of $R/P_i$ that cannot be the image of $c+P_i^{\alpha}$ under $\psi_i$. We see that $N_i=\vert R/P_i\vert^{\alpha_i-1}(\vert R/P_i\vert-\varepsilon(P_i,a-b))=\displaystyle{\vert R/P_i^{\alpha_i}\vert\left(1-\frac{\varepsilon(P_i,a-b)}{\vert R/P_i\vert}\right)}$. Hence, \[N_1N_2\cdots N_t=\prod_{i=1}^t\vert R/P_i^{\alpha_i}\vert\left(1-\frac{\varepsilon(P_i,a-b)}{\vert R/P_i\vert}\right)=F(a-b).\]       
\end{proof} 
\begin{theorem} \label{Thm6.1}
If $I$ is a prime ideal of $R$, then $d(G_{R/I})=1$. If $I$ is not a prime ideal of $R$ and $I$ has no prime ideal divisors of index $2$, then $d(G_{R/I})=2$.
\end{theorem}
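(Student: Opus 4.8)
The plan is to treat the two assertions separately. For the first, I would simply observe that if $I$ is a prime ideal then $R/I$ is a finite field, so every nonzero element of $R/I$ is a unit; hence any two distinct vertices $a+I$ and $b+I$ of $G_{R/I}$ satisfy $a-b+I\in U(R/I)$ and are adjacent, so $G_{R/I}$ is complete. Since $R/I$ is nontrivial it has at least two elements, and therefore $d(G_{R/I})=1$.

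For the second assertion, I would use Lemma~\ref{Lem6.1} to reduce the upper bound $d(G_{R/I})\le 2$ to the claim that $F(s)>0$ for every $s\in R$: once this is known, any two vertices of $G_{R/I}$ share a common neighbor, so $G_{R/I}$ is connected and every pair of vertices lies at distance at most $2$. To prove $F(s)>0$, write $I=P_1^{\alpha_1}\cdots P_t^{\alpha_t}$ and inspect the factors $1-\varepsilon(P_i,s)/\vert R/P_i\vert$ appearing in $F(s)$. Since $\varepsilon(P_i,s)\le 2$ while $\vert R/P_i\vert$ is the order of a finite field that is not $2$ by hypothesis, we get $\vert R/P_i\vert\ge 3>\varepsilon(P_i,s)$, so every factor is strictly positive and hence so is $F(s)$.

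Finally, to obtain the matching lower bound $d(G_{R/I})\ge 2$, I would note that since $I$ is not prime, $R/I$ is not a field, so it contains a nonzero nonunit $u+I$; then the distinct vertices $0+I$ and $u+I$ are nonadjacent, so $G_{R/I}$ is not complete. Together with the connectedness established above, this yields $d(G_{R/I})=2$.

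No step here is genuinely difficult. The one point worth emphasizing is that the hypothesis that $I$ has no prime ideal divisor of index $2$ is precisely what makes every factor of $F$ positive: an index-$2$ prime divisor $P$ would give a vanishing factor $1-2/\vert R/P\vert$, and then two vertices differing by an element outside $P$ would have no common neighbor (as in $G_{\mathbb{Z}/(6)}$, which is a $6$-cycle). Everything else is a direct appeal to Lemma~\ref{Lem6.1} and to the fact that $R/I$ is a field exactly when $I$ is prime.
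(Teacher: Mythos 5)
Your proposal is correct and follows essentially the same route as the paper: completeness of $G_{R/I}$ when $I$ is prime, positivity of every factor of $F(s)$ (via Lemma~\ref{Lem6.1}) to get $d\le 2$, and exhibiting a pair of distinct nonadjacent vertices for the lower bound. The only cosmetic difference is that the paper picks $p\in P\setminus I$ explicitly to get the nonadjacent pair, while you invoke the existence of a nonzero nonunit in the finite non-field $R/I$; these are the same argument in substance.
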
 
\begin{proof}
If $I$ is a prime ideal of $R$, then $R/I$ is a field. It follows that $G_{R/I}$ is complete, so $d(G_{R/I})=1$. 

Suppose $I$ is not a prime ideal of $R$ and $I$ has no prime ideal divisors of index $2$. Let $P$ be a prime ideal divisor of $I$. We may choose some $p\in P$ with $p\not\in I$. The vertices $I$ and $p+I$ are distinct and nonadjacent. This implies that $d(G_{R/I})\geq 2$. Now, for any $a,b\in R$, it is easy to see that $F(a-b)>0$ because the index of each prime ideal divisor of $I$ is at least $3$. It follows that any vertices $a+I$ and $b+I$ have a common neighbor, so $d(G_{R/I})=2$. 
\end{proof}
 
\begin{theorem} \label{Thm6.2}
Suppose that $I$ has exactly $m$ distinct prime ideal divisors of index $2$ in $R$, where $m\geq 1$. Then $G_{R/I}$ has exactly $2^{m-1}$ disconnected components. All such components are bipartite graphs that are isomorphic to each other and whose diameters are at most $3$. 
\end{theorem}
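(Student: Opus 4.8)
The plan is to detect the components by a simple $\mathbb{F}_2$-valued invariant, bound the diameters using the counting function $F$ of Lemma~\ref{Lem6.1}, and derive bipartiteness and mutual isomorphism from facts already established. First I would factor $I=P_1^{\alpha_1}\cdots P_m^{\alpha_m}J$, where $P_1,\dots,P_m$ are precisely the prime ideal divisors of $I$ of index $2$ and $J$ is coprime to each $P_i$ (with $J=R$ permitted); by hypothesis every prime ideal divisor of $J$ has index at least $3$. The Chinese Remainder Theorem gives $R/I\cong R/P_1^{\alpha_1}\oplus\cdots\oplus R/P_m^{\alpha_m}\oplus R/J$. Let $\pi_i\colon R/I\to R/P_i\cong\mathbb{F}_2$ be the natural surjection and set $\nu(a+I)=(\pi_1(a+I),\dots,\pi_m(a+I))\in\mathbb{F}_2^m$. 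If $a+I$ and $b+I$ are adjacent, then $a-b\notin P_i$ for every $i$, so every coordinate of $\nu(a+I)-\nu(b+I)$ equals $1$; that is, each edge of $G_{R/I}$ changes $\nu$ by $\mathbf{1}:=(1,\dots,1)$. Hence along any walk $\nu$ is constant modulo the order-$2$ subgroup $\langle\mathbf 1\rangle$, so the class $\bar\nu(a+I)\in\mathbb{F}_2^m/\langle\mathbf 1\rangle$ is constant on each component; since $\nu$ is surjective, all $2^{m-1}$ classes occur, giving at least $2^{m-1}$ components. Bipartiteness of each component is then immediate from Theorem~\ref{Thm4.2}, since $Q(I)=2$.

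Next I would show that each fiber $\nu^{-1}(\{\mathbf v,\mathbf v+\mathbf 1\})$ of $\bar\nu$ is connected with diameter at most $3$; combined with the previous paragraph this forces these fibers to be exactly the components and shows there are exactly $2^{m-1}$ of them, each of diameter at most $3$. The key observation is that, in the notation of Definition~\ref{Def6.1}, the factor $1-\varepsilon(P,s)/\vert R/P\vert$ appearing in $F(s)$ vanishes precisely when $P=P_i$ is an index-$2$ prime with $s\notin P_i$, and is strictly positive otherwise; consequently $F(a-b)>0$ if and only if $a-b\in P_i$ for all $i$, i.e.\ if and only if $\nu(a+I)=\nu(b+I)$. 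By Lemma~\ref{Lem6.1}, two vertices with equal $\nu$ have a common neighbor, hence are at distance at most $2$. If instead $\nu(a+I)-\nu(b+I)=\mathbf 1$, choose any neighbor $c+I$ of $b+I$ (one exists since $\varphi(R/I)\geq 1$, so $G_{R/I}$ has no isolated vertex by Theorem~\ref{Thm4.1}); then $\nu(c+I)=\nu(b+I)+\mathbf 1=\nu(a+I)$, so $a+I$ and $c+I$ are at distance at most $2$ and therefore $a+I$ and $b+I$ are at distance at most $3$. This yields connectivity and the diameter bound for each fiber.

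Finally, for the claim that the components are pairwise isomorphic, I would use that each translation $\tau_c\colon a+I\mapsto a+c+I$ is an automorphism of $G_{R/I}$, since adjacency depends only on differences. Given components $\mathcal C_1\ni a_1+I$ and $\mathcal C_2\ni a_2+I$, the automorphism $\tau_{a_2-a_1}$ sends $a_1+I$ to $a_2+I$ and hence carries $\mathcal C_1$ isomorphically onto the component containing $a_2+I$, namely $\mathcal C_2$.

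I expect the main obstacle to be the second paragraph: recognizing that $F(a-b)>0$ is exactly equivalent to $\nu(a+I)=\nu(b+I)$ and then organizing the two distance cases cleanly. Once that dichotomy is isolated, the diameter bound follows with little effort, and the first, third (bipartiteness), and fourth paragraphs are essentially formal.
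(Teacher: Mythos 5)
Your proposal is correct and follows essentially the same route as the paper: the same $\mathbb{F}_2^m$-valued parity invariant (the paper's map $T$), the same observation that $F(a-b)>0$ whenever the difference lies in every index-$2$ prime so that Lemma~\ref{Lem6.1} supplies common neighbors, and the same pairing of fibers to get exactly $2^{m-1}$ components of diameter at most $3$. The only cosmetic differences are that you deduce bipartiteness from Theorem~\ref{Thm4.2} and mutual isomorphism from explicit translation automorphisms, where the paper exhibits the bipartition of each component directly and invokes the symmetry of Theorem~\ref{Thm4.1}.
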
 
\begin{proof} 
For any vector $v\in\mathbb{F}_2^{m}$, let $[v]_i$ denote the $i^{\text{th}}$ coordinate of $v$. Let $P_1,P_2,\ldots,P_m$ be the prime ideal divisors of $I$ of index $2$. We define a function $T\colon R/I\rightarrow\mathbb{F}_2^m$ by specifying that for each $i\in\{1,2,\ldots,m\}$ and $a\in R$,
we have \[[T(a+I)]_i=\begin{cases} 0, & \mbox{if } a\in P_i; \\ 1, & \mbox{if } a\not\in P_i. \end{cases}\] It is easy to verify that $T$ is well-defined. If we write $T^{-1}(v)=\{V\in R/I\colon T(V)=v\}$, then the collection of sets $\displaystyle{\{T^{-1}(v)\}_{v\in\mathbb{F}_2^m}}$ forms a partition of $R/I$ into $2^m$ subsets. Let $\textbf{1}$ denote the vector in $\mathbb{F}_2^m$ whose coordinates are all $1$'s. It is easy to see that $T(V+W)=T(V)+T(W)$ for all $V,W\in R/I$. Furthermore, $(R/I)^\times\subseteq T^{-1}(\textbf{1})$. Therefore, two vertices $V$ and $W$ of $G_{R/I}$ can only be adjacent if $T(V)=\textbf{1}+T(W)$. In other words, for each $v\in\mathbb{F}_2^m$, the vertices in $T^{-1}(v)$ are only adjacent to vertices in $T^{-1}(\textbf{1}+v)$. Let $A$ be a set of $2^{m-1}$ vectors in $\mathbb{F}_2^m$ with the property that $v\in A$ if and only if $\textbf{1}+v\not\in A$, and write $A=\{v_1,v_2\ldots,v_{2^{m-1}}\}$. For each $i\in\{1,2,\ldots,2^{m-1}\}$, write $S_i=T^{-1}(v_i)\cup T^{-1}(\textbf{1}+v_i)$, and let $B_i$ be the subgraph of $G_{R/I}$ induced by $S_i$. Because no vertex in $S_i$ is adjacent to any vertex in $S_j$ when $i\neq j$, we see that $G_{R/I}$ is the union of the $2^{m-1}$ disconnected subgraphs $B_1,B_2,\ldots,B_{2^{m-1}}$. Furthermore, each subgraph $B_i$ is bipartite because we may partition the set of vertices of $B_i$ into the sets $T^{-1}(v_i)$ and $T^{-1}(\textbf{1}+v_i)$. 

Choose some $k\in\{1,2,\ldots,2^{m-1}\}$ and some $a+I,b+I\in T^{-1}(v_k)$. Observe that $T(a-b+I)=T(a+I)-T(b+I)=v_k-v_k=\textbf{0}$. In other words $\varepsilon(P_i,a-b)=1$ for all $i\in\{1,2,\ldots,m\}$. This implies that $F(a-b)>0$, so Lemma \ref{Lem6.1} tells us that $a+I$ and $b+I$ have a common neighbor. The same argument shows that any two vertices in $T^{-1}(\textbf{1}+v_k)$ must have a common neighbor. Therefore, the subgraph $B_k$ is connected and has diameter at most $3$. As $k$ was arbitrary, this shows that each subgraph $B_i$ is connected and has diameter at most $3$. Finally, the subgraphs $B_1,B_2,\ldots,B_{2^{m-1}}$ are isomorphic to each other because $G_{R/I}$ is symmetric by Lemma \ref{Thm4.1}.
\end{proof}

Theorem \ref{Thm6.1} gives the diameter of $G_{R/I}$ when $I$ is not divisible by a prime ideal of index $2$. When $I$ is divisible by a prime ideal of index $2$, Theorem \ref{Thm6.2} tells us that $G_{R/I}$ could be a union of several disconnected components. The following theorem determines the diameter of each component of $G_{R/I}$ when $I$ is divisible by a prime ideal of index $2$. 

\begin{theorem} \label{Thm6.3}
Let $m\in\mathbb Z^+$. Suppose $I=P_1^{\alpha_1}P_2^{\alpha_2}\cdots P_m^{\alpha_m}J$, where $P_1,P_2,\ldots,P_m$ are distinct prime ideals of index $2$ in $R$, $\alpha_1,\alpha_2,\cdots,\alpha_m$ are positive integers, and $J$ is an ideal of $R$ that is not divisible by a prime ideal of index $2$. If $J=R$ and $\alpha_i=1$ for all $i\in\{1,2,\ldots,m\}$, then the diameter of each component of $G_{R/I}$ is $1$. If $J=R$ and $\alpha_i>1$ for some $i\in\{1,2,\ldots,m\}$, then the diameter of each component of $G_{R/I}$ is $2$. If $J\neq R$, then the diameter of each component of $G_{R/I}$ is $3$.
\end{theorem}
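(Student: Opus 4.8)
The plan is to reduce everything to a single component. By Theorem \ref{Thm6.2}, $G_{R/I}$ has $2^{m-1}$ pairwise isomorphic components, each connected with diameter at most $3$, so it suffices to determine the diameter of the one component $S$ containing the vertex $0+I$; in the notation of the proof of Theorem \ref{Thm6.2} this is $S=T^{-1}(\textbf{0})\cup T^{-1}(\textbf{1})$. The tool for reading off distances will be Lemma \ref{Lem6.1}: vertices $a+I$ and $b+I$ have a common neighbor if and only if $F(a-b)>0$. The structural fact I would extract first is this: if $a+I$ and $b+I$ lie in a common component, then $T(a-b+I)\in\{\textbf{0},\textbf{1}\}$, and the two possibilities behave oppositely with respect to $F$. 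If $T(a-b+I)=\textbf{0}$, then $\varepsilon(P_i,a-b)=1$ for every $i\le m$, and for every prime ideal divisor $Q$ of $J$ we have $\vert R/Q\vert\ge 3$ (since $J$ has no prime divisor of index $2$), so every factor in the product defining $F(a-b)$ is positive; hence $F(a-b)>0$ and such a pair is at distance at most $2$. If instead $T(a-b+I)=\textbf{1}$, then $\varepsilon(P_1,a-b)=2=\vert R/P_1\vert$, so the factor of $F(a-b)$ corresponding to $P_1$ is $0$; hence $F(a-b)=0$, such a pair has no common neighbor, and it is therefore at distance $1$ or at distance at least $3$. Moreover, since the primes $P_1,\ldots,P_m$ are automatically avoided in this case, $a+I$ and $b+I$ are adjacent if and only if $a-b$ lies in no prime ideal divisor of $J$.

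With this in hand, the three cases fall out quickly. If $J=R$ and every $\alpha_i=1$, then $R/I\cong\mathbb{F}_2^m$, so $\varphi(R/I)=\vert U(R/I)\vert=1$; by Theorem \ref{Thm4.1} the graph $G_{R/I}$ is $1$-regular, so each connected component is a single edge $K_2$ and has diameter $1$. If $J=R$ and some $\alpha_i>1$, then each of the $2^m$ fibers $T^{-1}(v)$ has $\vert R/I\vert/2^m=2^{\sum_i(\alpha_i-1)}>1$ elements (the map $T$ being a surjective group homomorphism), so $S$ contains two distinct vertices $a+I$ and $b+I$ in the same fiber; since $a-b\in P_1$ they are non-adjacent, giving diameter at least $2$. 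For the reverse inequality, a same-fiber pair in $S$ has a common neighbor by the structural fact, while an opposite-fiber pair has its difference outside every prime ideal divisor of $I$ (the only such divisors being $P_1,\ldots,P_m$) and is therefore adjacent; so every pair in $S$ is at distance at most $2$, and the diameter is exactly $2$.

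Finally, suppose $J\ne R$. The bound $d(S)\le 3$ is immediate from Theorem \ref{Thm6.2}, so it remains to produce two vertices of $S$ at distance at least $3$. Pick any prime ideal divisor $Q$ of $J$; then $Q\notin\{P_1,\ldots,P_m\}$ because $\vert R/Q\vert\ge 3$, so the Chinese Remainder Theorem provides $c\in R$ with $c\equiv 1\pmod{P_i}$ for all $i\le m$ and $c\equiv 0\pmod{Q}$. Then $c\notin P_1\supseteq I$, so $c+I\ne 0+I$; we have $T(c+I)=\textbf{1}$, so $c+I$ and $0+I$ lie in the same component $S$; they are non-adjacent because $c\in Q$ and $Q$ is a prime ideal divisor of $I$; and $F(c)=0$ because the factor corresponding to $P_1$ vanishes, as $\varepsilon(P_1,c)=2=\vert R/P_1\vert$. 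Hence $0+I$ and $c+I$ are non-adjacent with no common neighbor, so they are at distance at least $3$, and the diameter of $S$---and thus of every component---is exactly $3$.

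The main obstacle I foresee is not any single computation but the bookkeeping: one must keep straight which pairs of vertices lie in a common component, recognize that the only pairs that can possibly be at distance $3$ are the ``opposite-fiber'' pairs (on which $F$ is forced to vanish), and then, in the case $J\ne R$, build an opposite-fiber pair that is genuinely non-adjacent---which is precisely where the existence of a prime divisor of $J$ of index at least $3$ is used. The degenerate subcase $J=R$ with all $\alpha_i=1$ also merits a moment's attention, since there the component is too small for the ``at most $3$'' and ``at least $2$'' arguments to bite, and one instead just observes that a connected two-vertex graph is $K_2$.
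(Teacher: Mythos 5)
Your proposal is correct and takes essentially the same approach as the paper: both rest on the component structure from Theorem \ref{Thm6.2} and on Lemma \ref{Lem6.1} (same-fiber differences give $F>0$ and distance at most $2$, opposite-fiber differences give $F=0$, with a witness in $J$ forcing distance $3$ when $J\neq R$). The only cosmetic differences are that the paper phrases the $J=R$ case as ``each component is complete bipartite'' and distinguishes by vertex count, and chooses $y\in J$ avoiding all $P_i$ rather than your CRT element $c$.
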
 
\begin{proof} 
Preserve the notation from the proof of Theorem \ref{Thm6.2}, and let $\lambda=\alpha_1+\alpha_2+\cdots +\alpha_m$. First, suppose that $J=R$. The number of vertices in $G_{R/I}$ is $\vert R/I\vert=\displaystyle{\prod_{i=1}^m\vert R/P_i\vert^{\alpha_i}=2^{\lambda}}$. We know from the proof of Theorem \ref{Thm6.2} that $T^{-1}(v_1)$ and $T^{-1}(\textbf{1}+v_1)$ form two partite sets of the bipartite component $B_1$. If $a+I\in T^{-1}(v_1)$ and $b+I\in T^{-1}(\textbf{1}+v_1)$, then $T(b-a+I)=T(b+I)-T(a+I)=(\textbf{1}+v_1)-v_1=\textbf{1}$. This implies that $b-a\not\in P_i$ for all $i\in\{1,2,\ldots,m\}$, so $a+I$ and $b+I$ are adjacent. It follows that $B_1$ is a complete bipartite graph. Theorem \ref{Thm6.2} tells us that $G_{R/I}$ has $2^{m-1}$ disconnected components that are all isomorphic to each other, so $B_1$ must contain exactly $\displaystyle{\frac{2^{\lambda}}{2^{m-1}}=2^{\lambda-m+1}}$ vertices. If $\alpha_i=1$ for all $i\in\{1,2,\ldots,m\}$, then $B_1$ has only two vertices. In this case, the diameter of $B_1$ is $1$ because $B_1$ is isomorphic to the complete bipartite graph $K_{1,1}$. If $\alpha_i>1$ for some $i\in\{1,\ldots,m\}$, then $\lambda>m$. In this case, $B_1$ is a complete bipartite graph with at least four vertices, so it must have diameter $2$. Because all of the components of $G_{R/I}$ are isomorphic to $B_1$, this completes the proof of the case in which $J=R$. 

Suppose, now, that $J\neq R$. The Chinese remainder theorem guarantees that we may choose some $y\in J$ such that $y\not\in P_i$ for all $i\in\{1,2,\ldots,m\}$. There exists some $\ell\in\{1,2,\ldots,2^{m-1}\}$ such that $S_{\ell}=T^{-1}(\textbf{0})\cup T^{-1}(\textbf{1})$. Because $T(I)=\textbf{0}$ and $T(y+I)=\textbf{1}$, we know that $I$ and $y+I$ are in the same component $B_{\ell}$. The vertices $I$ and $y+I$ are not adjacent because $y\in J\neq R$. Also, Lemma \ref{Lem6.1} implies that $I$ and $y+I$ have no common neighbors because $\varepsilon(P_1,y)=2$. Hence, the diameter of $B_{\ell}$ is at least $3$. By Theorem \ref{Thm6.2}, the diameter of $B_{\ell}$ must be equal to $3$. The proof then follows from the fact that all components of $G_{R/I}$ are isomorphic to $B_{\ell}$.    
\end{proof} 
\section{Strong Colorings of Generalized Totient Graphs} 
Suppose we are given a positive integer $k$ and a graph $G$ with $n$ vertices. Let $\ell$ be the least nonnegative integer such that $k\vert\ell+n$, and let $H$ be the graph that results from adding $\ell$ isolated vertices to $G$. We say that $G$ is strongly $k$-colorable if, for any given partition of the vertices of $H$ into subsets of size $k$, it is possible to properly color the vertices of $H$ so that each color appears exactly once in each subset of the partition. Observe that if $G$ is simple and has some vertex $v$ of degree at least $k$, then we can choose to partition the vertices of $H$ into subsets of size $k$ so that one of the subsets is contained in the neighborhood of $v$. As no vertex in the neighborhood of $v$ can have the same color as $v$ in a proper coloring of $H$, we see that any simple graph with a vertex of degree at least $k$ cannot be strongly $k$-colorable. The strong chromatic number of a graph $G$, denoted $s\chi(G)$, is the smallest positive integer $k$ such that $G$ is strongly $k$-colorable. It follows from the preceding discussion that $s\chi(G)$ must be greater than $\Delta(G)$ if $G$ is simple. 
\par 
We say that an edge-coloring of a graph is strong if any two distinct edges with adjacent endpoints are colored differently. The strong chromatic index of a graph $G$, denoted $s'(G)$, is the smallest positive integer $r$ such that it is possible to strongly edge-color $G$ with $r$ colors. 
\par 
In this section, we briefly study the strong chromatic numbers and strong chromatic indices of some generalized totient graphs.
\begin{theorem} \label{Thm5.1}
Let $P$ be a nonzero prime ideal of $R$ such that $R/P$ is finite, and let $\alpha$ be a positive integer. Then $s\chi(G_{R/P^{\alpha}})=\vert R/P\vert^{\alpha}$. 
\end{theorem}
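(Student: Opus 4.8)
The plan is to first read off the structure of the graph. Writing $q=\vert R/P\vert$ and $n=\vert R/P^{\alpha}\vert=q^{\alpha}$, two vertices $x+P^{\alpha}$ and $y+P^{\alpha}$ are non-adjacent exactly when $x-y\in P$, so $G_{R/P^{\alpha}}$ is the complete multipartite graph whose parts are the $q$ cosets of $P/P^{\alpha}$, each of size $s:=q^{\alpha-1}$; by Theorem \ref{Thm4.1} it is regular of degree $\varphi(R/P^{\alpha})=q^{\alpha}-q^{\alpha-1}=n-s$. The upper bound $s\chi(G_{R/P^{\alpha}})\le n$ is immediate: taking $k=n$ forces the number $\ell$ of added isolated vertices to be $0$ and $H=G_{R/P^{\alpha}}$, and the only partition of $V(H)$ into parts of size $n$ is $\{V(H)\}$, on which giving all $n$ vertices distinct colors is a proper coloring of the required kind. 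Since the preliminary discussion preceding the theorem gives $s\chi(G_{R/P^{\alpha}})>\Delta(G_{R/P^{\alpha}})=n-s$, the claim is already proved when $s=1$, i.e.\ when $\alpha=1$. So the real work is to treat $\alpha\ge 2$, where it remains to show that $G_{R/P^{\alpha}}$ is not strongly $k$-colorable for any $k$ with $n-s<k<n$.

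Fix such a $k$ and set $j=n-k\in\{1,\dots,s-1\}$. Because $q\ge 2$ one has $n-s\ge n/2$, hence $k>n/2$, and therefore the least nonnegative $\ell$ with $k\mid n+\ell$ is $\ell=2k-n\ge 1$; thus $\vert V(H)\vert=n+\ell=2k$ and every partition of $V(H)$ into blocks of size $k$ has exactly two blocks. Let $w_{1},\dots,w_{\ell}$ be the added isolated vertices, pick one part $A$ of $G_{R/P^{\alpha}}$, and consider the partition $\{B_{1},B_{2}\}$ given by
\[
B_{1}=A\cup\{w_{1},\dots,w_{k-s}\},\qquad
B_{2}=\bigl(V(G_{R/P^{\alpha}})\setminus A\bigr)\cup\{w_{k-s+1},\dots,w_{\ell}\}.
\]
Here $k-s\ge 1$ and $k-s\le\ell$ both follow from $k>n-s$, and a direct count gives $\vert B_{1}\vert=\vert B_{2}\vert=k$; the point of the construction is that the number of added vertices landing in $B_{2}$ is $\ell-(k-s)=k-(n-s)$, which is strictly less than $s$ since $k<n$.

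Now suppose $G_{R/P^{\alpha}}$ were strongly $k$-colorable and apply a proper coloring of $H$ that uses each color exactly once on $B_{1}$ and exactly once on $B_{2}$. The $s$ vertices of $A\subseteq B_{1}$ then receive $s$ distinct colors, and each of those colors must reappear (exactly once) on $B_{2}$. For $a\in A$, the vertex of $B_{2}$ sharing $a$'s color is non-adjacent to $a$ in $H$; but the $H$-non-neighbors of $a$ are precisely the members of $A$ together with $w_{1},\dots,w_{\ell}$, and since this vertex lies in $B_{2}$ it must be one of $w_{k-s+1},\dots,w_{\ell}$. This produces an injection from the $s$-element set $A$ into a set of size $k-(n-s)<s$, which is absurd. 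Hence no $k<n$ makes $G_{R/P^{\alpha}}$ strongly $k$-colorable, so $s\chi(G_{R/P^{\alpha}})=n=\vert R/P\vert^{\alpha}$. The only genuinely delicate part is the bookkeeping — verifying $\ell=2k-n$, that there really are enough added vertices to form $B_{1}$, and that $B_{2}$ ends up with fewer than $s$ of them — after which the pigeonhole step above is the conceptual crux and is short.
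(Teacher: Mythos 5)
Your proof is correct and follows essentially the same route as the paper: after noting $s\chi>\Delta=\varphi(R/P^{\alpha})$, you rule out each $k$ strictly between $\vert R/P\vert^{\alpha}-\vert R/P\vert^{\alpha-1}$ and $\vert R/P\vert^{\alpha}$ by padding one coset of $P$ (a part of the multipartite graph) with added isolated vertices to form one block of size $k$ and then showing by pigeonhole that the colors on that coset cannot all recur in the other block. The bookkeeping ($\ell=2k-n$, the sizes of the two blocks, and the count $k-(n-s)<s$ of spare isolated vertices) matches the paper's computation exactly.
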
 

\begin{proof} 
For convenience, we write $y=\vert R/P\vert$. Note that $G_{R/P^{\alpha}}$ has $y^{\alpha}$ vertices. We know that $s\chi(G_{R/P^{\alpha}})>\Delta(G_{R/P^{\alpha}})=\varphi(R/P^{\alpha})$, and Theorem \ref{Thm2.2} tells us that $\varphi(R/P^{\alpha})=y^{\alpha}-y^{\alpha-1}$. Let $k=y^{\alpha}-y^{\alpha-1}+m$ for some positive integer $m<y^{\alpha-1}$, and assume by way of contradiction that $G_{R/P^{\alpha}}$ is strongly $k$-colorable. It is easy to see that $\ell=2k-y^{\alpha}$ is the least nonnegative integer such that $\ell+y^{\alpha}$ is divisible by $k$. Let $H$ be the graph that results from adding $\ell$ isolated vertices $v_1,v_2,\ldots,v_{\ell}$ to $G_{R/P^{\alpha}}$. Let $A=\{v+P^{\alpha}\in R/P^{\alpha}\colon v\in P\}$, and let $B=\{v_1,v_2,\ldots,v_r\}$, where $r=k-y^{\alpha-1}=y^{\alpha}-2y^{\alpha-1}+m$. Then $\vert A\cup B\vert=k$ because $\vert A\vert=y^{\alpha-1}$. If we let $C$ be the set of $k$ vertices of $H$ that are not in $A\cup B$, then we see that the two sets $A\cup B$ and $C$ form a partition of the set of vertices of $H$ into subsets of size $k$. Because $G_{R/P^{\alpha}}$ is strongly $k$-colorable, it is possible to properly color the vertices of $H$ so that each color appears exactly once in $A\cup B$ and exactly once in $C$. 

Let $s+P^{\alpha}\in A$ be colored with the color $c$. We know that $c$ must be used to color exactly one vertex $V\in C$. Suppose $V=u+P^{\alpha}\in G_{R/P^{\alpha}}$. Then $u\not\in P$ because $V\not\in A$. However, $s\in P$, so $s-u\not\in P$. This shows that $s-u+P^{\alpha}\in U(R/P^{\alpha})$, so $V$ is adjacent to $s+P^{\alpha}$. This is a contradiction because we assumed the coloring to be proper. Thus, each of the $y^{\alpha-1}$ colors used to color the elements of $A$ must be used to color one of the elements of the set $D=\{v_{r+1},v_{r+2},\ldots,v_{\ell}\}$. However, this is impossible because $\vert D\vert=\ell-r=2k-y^{\alpha}-(y^{\alpha}-2y^{\alpha-1}+m)=2(y^{\alpha}-y^{\alpha-1}+m)-y^{\alpha}-(y^{\alpha}-2y^{\alpha-1}+m)=m<y^{\alpha-1}$. Hence, we must have $s\chi(G_{R/P^{\alpha}})\geq y^{\alpha}$. Clearly, $G_{R/P^{\alpha}}$ is strongly $y^{\alpha}$-colorable, so $s\chi(G_{R/P^{\alpha}})=y^{\alpha}=\vert R/P\vert^{\alpha}$. 
\end{proof} 
\begin{theorem} \label{Thm5.2}
Let $P$, $Q$, and $M$ be nonzero ideals of $R$ such that $P$ and $Q$ are prime, $M\not\subseteq Q$, $R/P$ and $R/M$ are finite and nontrivial, and $\vert R/Q\vert=2$. Let $\alpha$ be a positive integer. We have \[s'(G_{R/P^{\alpha}})=\frac{1}{2}\vert R/P\vert^{\alpha}\varphi(R/P^{\alpha})=\frac{1}{2}\vert R/P\vert^{2\alpha-1}(\vert R/P\vert-1)\] and \[s'(G_{R/QM})\leq\frac{1}{2}\vert R/M\vert\varphi(R/M).\]     
\end{theorem}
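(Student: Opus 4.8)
I would prove the two assertions separately. For $G_{R/P^{\alpha}}$, the plan is to show that every pair of distinct edges must receive distinct colors in a strong edge coloring, so that $s'(G_{R/P^{\alpha}})$ is forced to equal $\vert E(G_{R/P^{\alpha}})\vert$; since $G_{R/P^{\alpha}}$ is $\varphi(R/P^{\alpha})$-regular on $\vert R/P^{\alpha}\vert$ vertices (Theorem~\ref{Thm4.1}), this count is $\tfrac{1}{2}\vert R/P^{\alpha}\vert\,\varphi(R/P^{\alpha})$, and substituting $\vert R/P^{\alpha}\vert=\vert R/P\vert^{\alpha}$ together with $\varphi(R/P^{\alpha})=\vert R/P\vert^{\alpha-1}(\vert R/P\vert-1)$ (Theorem~\ref{Thm2.2}) yields the stated closed form. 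A strong edge coloring is in particular a proper edge coloring, so edges sharing a vertex automatically get different colors; hence it suffices to show that two \emph{vertex-disjoint} edges $e_1=\{a+P^{\alpha},b+P^{\alpha}\}$ and $e_2=\{c+P^{\alpha},d+P^{\alpha}\}$ are always joined by an edge. Indeed $a-b\notin P$ and $c-d\notin P$, so if none of $a-c,\,a-d,\,b-c,\,b-d$ were units of $R/P^{\alpha}$ then all four would lie in $P$, forcing $c-d=(a-d)-(a-c)\in P$, a contradiction; hence some endpoint of $e_1$ is adjacent to some endpoint of $e_2$. Together with the trivial bound $s'(G_{R/P^{\alpha}})\le\vert E(G_{R/P^{\alpha}})\vert$, this gives the equality.

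For the bound on $s'(G_{R/QM})$, I would first note that $Q$ is a maximal ideal (its quotient is a finite field) and $M\not\subseteq Q$, so $Q+M=R$ and hence $R/QM\cong R/Q\oplus R/M\cong\mathbb{F}_2\oplus R/M$. Writing $N=\vert R/M\vert$ and $\varphi=\varphi(R/M)$, the graph $G_{R/QM}$ is bipartite with parts $X_0=\{0\}\times R/M$ and $X_1=\{1\}\times R/M$, where $(0,a)$ is adjacent to $(1,b)$ precisely when $a-b\in U(R/M)$. Thus the edges of $G_{R/QM}$ are in bijection with the pairs $(a,u)\in R/M\times U(R/M)$, the pair $(a,u)$ corresponding to the edge $e_{a,u}=\{(0,a),(1,a-u)\}$; in particular $G_{R/QM}$ has exactly $N\varphi$ edges.

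The crucial step is to pair up the edges by means of the map $\iota\colon(a,u)\mapsto(a-u,-u)$ on $R/M\times U(R/M)$. This $\iota$ is its own inverse, and it is fixed-point-free since $a\neq a-u$ (because $u\neq 0$ in the nontrivial ring $R/M$). I would then verify that each orbit $\{e_{a,u},\,e_{a-u,-u}\}$ is an induced matching of $G_{R/QM}$ of size $2$: the four endpoints $(0,a),\,(1,a-u),\,(0,a-u),\,(1,a)$ are distinct, and among the six pairs they form, $e_{a,u}$ and $e_{a-u,-u}$ are the only edges, because $(0,a)\not\sim(1,a)$ and $(0,a-u)\not\sim(1,a-u)$ (as $0\notin U(R/M)$) while the two remaining pairs lie within a single part and the graph is bipartite. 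Assigning a distinct color to each of the $\tfrac{1}{2}N\varphi$ orbits then gives a strong edge coloring of $G_{R/QM}$, so $s'(G_{R/QM})\le\tfrac{1}{2}\vert R/M\vert\,\varphi(R/M)$. The one genuinely creative point in the whole argument is hitting on the pairing $\iota$; once it is in hand, both parts are just routine verification, so I expect no further obstacle.
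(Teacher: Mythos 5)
Your proposal is correct and follows essentially the same route as the paper: for $G_{R/P^{\alpha}}$ you show every pair of edges has adjacent endpoints (you prove this directly, where the paper cites the domination statement of Theorem \ref{Thm4.5}), forcing $s'$ to equal the edge count; for $G_{R/QM}$ your involution $\iota$ produces exactly the paper's pairing of edges, since translation by the paper's element $\mu\in M\setminus Q$ corresponds under the CRT isomorphism $R/QM\cong\mathbb{F}_2\oplus R/M$ to swapping the $\mathbb{F}_2$-coordinate, i.e.\ to pairing $\{(0,a),(1,a-u)\}$ with $\{(0,a-u),(1,a)\}$.
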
 
\begin{proof} 
By Theorem \ref{Thm4.5}, any two adjacent vertices of $G_{R/P^{\alpha}}$ dominate $G_{R/P^{\alpha}}$. Therefore, in any strong edge-coloring of $G_{R/P^{\alpha}}$, no two distinct edges can have the same color. This means that any strong edge-coloring of $G_{R/P^{\alpha}}$ must use exactly $\frac{1}{2}\vert R/P\vert^{\alpha}\varphi(R/P^{\alpha})$ colors because $G_{R/P^{\alpha}}$ has $\frac{1}{2}\vert R/P\vert^{\alpha}\varphi(R/P^{\alpha})$ edges. 

Note that $2\in Q$ since $R/Q\cong\mathbb F_2$. Consider the graph $G_{R/QM}$, which has an edge set of size \[\frac{1}{2}\vert R/QM\vert\varphi(R/QM)=\frac{1}{2}\vert R/Q\vert\cdot\vert R/M\vert\varphi(R/Q)\varphi(R/M)=
\vert R/M\vert\varphi(R/M).\] Choose some $\mu\in M$ with $\mu\not\in Q$. For any vertex $v+QM$ of $G_{R/QM}$, let $f(v+QM)=v+\mu+QM$. For any edge $\{Y,Z\}$ of $G_{R/QM}$, let $h(\{Y,Z\})=\{f(Y),f(Z)\}$. Notice that, for any vertex $Y$ of $G_{R/QM}$, we have $f(f(Y))=Y$ because $2\mu\in QM$. Furthermore, it is easy to see that $f(Y)$ is not equal to $Y$ or adjacent to $Y$. This shows that $h(h(E))=E\neq h(E)$ for all edges $E$ of $G_{R/QM}$. Let us color the edges of $G_{R/QM}$ so that two distinct edges $e_1$ and $e_2$ have the same color if and only if $e_2=h(e_1)$. Because each color is used to color two of the edges of $G_{R/QM}$, this coloring uses $\frac{1}{2}\vert R/M\vert\varphi(R/M)$ colors. 

To show that this coloring is strong, suppose $a+QM$ and $b+QM$ are adjacent vertices of $G_{R/QM}$. We stated already that $a+QM$ is not adjacent to $f(a+QM)$. Because $\mu\not\in Q$ and $a-b\not\in Q$, we must have $\mu+Q=a-b+Q=1+Q$. Therefore, $a-[b+\mu]\in Q$, which means that $a+QM-f(b+QM)\not\in (R/QM)^\times$. Hence, $a+QM$ is not adjacent to $f(b+QM)$. By the same token, $b+QM$ is not adjacent to $f(a+QM)$ or $f(b+QM)$. This shows that, for any edge $E$, the only other edge with the same color as $E$ cannot have an endpoint adjacent to an endpoint of $E$. Thus, the coloring is strong.
\end{proof} 
\section{Erratum} 
We use this section to briefly expose two mistakes that have appeared in the literature. First, Manjuri and Maheswari have made the claim that if $n$ is a composite odd integer, then $\gamma_{cl}(G_{\mathbb{Z}/(n)})$ exists if and only if $n$ has $2$ or fewer distinct prime factors \cite{Maheswari13a}. However, setting $R=\mathbb{Z}$ and $I=(n)$ in Theorem \ref{Thm4.5} shows that their claim is false.

We now provide a counterexample to Theorem 4.3 of a different paper of Manjuri and Maheswari \cite{Maheswari13b}. We hope to encourage the reader to discover a correct solution to this interesting problem and perhaps even generalize the solution in order to solve an analogous problem concerning generalized totient graphs. We have been able to obtain partial results in this direction, but have been unable to solve the problem in general. 

The domination number $\gamma(G)$ of a graph $G$ is the size of the smallest dominating set of $G$. The paper states that if a composite integer $n$ is not twice a prime, then $\gamma(G_{\mathbb{Z}/(n)})$ is the smallest positive integer $\ell$ such that every set of $\ell$ consecutive integers contains an element that is not relatively prime to $n$ (this number $\ell$ is often denoted $g(n)$, and $g$ is called Jacobsthal's function). One may verify that the set $\{\overline 0,\overline 2,\overline{5},\overline{27}\}$ dominates $G_{\mathbb{Z}/(30)}$ (we let $\overline k=k+(30)$). Therefore, $\gamma(G_{\mathbb Z/(30)})\leq 4$ (in fact, it can be shown that $\gamma(G_{\mathbb Z/(pqr)})=4$ for any distinct primes $p,q,r$). However, $g(30)=6$, so the claim is false. 
\section{Concluding Remarks}

We wish to acknowledge the potential to generalize and strengthen the preceding results. For example, one might wish to study the infinite graphs that arise from eliminating the restriction that $R/I$ be finite. Alternatively, one might attempt to gather information about analogues of gcd-graphs \cite{Klotz07} in Dedekind domains. There is certainly room for strengthening Theorems \ref{Thm5.1} and \ref{Thm5.2}, which only apply to generalized totient graphs with specific properties. Theorem \ref{Thm4.6} leads us to inquire about which generalized totient graphs have girths not equal to $3$ or $4$. Finally, we note that we have obviously not exhausted all of the graph parameters of generalized totient graphs that one might wish to study. 

\section{Acknowledgments}
\noindent Dedicated to my wonderful and inspiring parents, Marc and Susan Defant. 

\noindent This work was funded in part by NSF grant no. 1262930.

\end{document}